\theoremstyle{plain}
\newtheorem{thm}{Theorem}[section]
\newtheorem{claim}[thm]{Claim}
\newtheorem{definition}[thm]{Definition}
\newtheorem{problem}[thm]{Problem}
\newtheorem{remark}[thm]{Remark}
\newtheorem{theorem}[thm]{Theorem}
\newtheorem{fact}[thm]{Fact}
\newtheorem{ex}[thm]{Example}
\numberwithin{equation}{section}
\newcommand{\N}{\mathbb{N}}
\newcommand{\R}{\mathbb{R}}
\DeclareMathOperator{\lip}{Lip\,\!}
\DeclareMathOperator{\dist}{dist\,\!}
\DeclareMathOperator{\co}{co\,\!}
\DeclareMathOperator{\interior}{int\,\!}
\begin{document}

\title[Prescribing tangent hyperplanes]{Prescribing tangent hyperplanes to $C^{1,1}$ and $C^{1, \omega}$ convex hypersurfaces in Hilbert and superreflexive Banach spaces}

\author{Daniel Azagra}
\address{ICMAT (CSIC-UAM-UC3-UCM), Departamento de An{\'a}lisis Matem{\'a}tico y Matem\'atica Aplicada,
Facultad Ciencias Matem{\'a}ticas, Universidad Complutense, 28040, Madrid, Spain.  {\bf DISCLAIMER:} 
The first-named author is affiliated with Universidad Complutense de Madrid, but this does not mean this institution has offered him all the support he expected. On the contrary, the Biblioteca Complutense has hampered his research by restricting his access to books.
}
\email{azagra@mat.ucm.es}

\author{Carlos Mudarra}
\address{ICMAT (CSIC-UAM-UC3-UCM), Calle Nicol\'as Cabrera 13-15.
28049 Madrid, Spain}
\email{carlos.mudarra@icmat.es}

\date{June 25, 2018}

\keywords{convex body, convex function, Whitney extension theorem, differentiability, signed distance function}

\subjclass[2010]{26B05, 26B25, 52A05, 52A20. }

\begin{abstract}
Let $X$ denote $\R^n$ or, more generally, a Hilbert space. Given an arbitrary subset $C$ of $X$ and a collection $\mathcal{H}$ of affine hyperplanes of $X$ such that every $H\in\mathcal{H}$ passes through some point $x_{H}\in C$, and $C=\{x_H : H\in\mathcal{H}\}$, what conditions are necessary and sufficient for the existence of a $C^{1,1}$ convex hypersurface $S$ in $X$ such that $H$ is tangent to $S$ at $x_H$ for every $H\in\mathcal{H}$? In this paper we give an answer to this question. We also provide solutions to similar problems for convex hypersurfaces of class $C^{1, \omega}$ in Hilbert spaces, and for convex hypersurfaces of class $C^{1, \alpha}$ in superreflexive Banach spaces having equivalent norms with moduli of smoothness of power type $1+\alpha$, $\alpha\in (0, 1]$.
\end{abstract}

\maketitle

\section{Introduction}

This paper concerns the following question. 

\begin{problem}\label{main problem}
{\em Let $X$ denote the space $\R^n$ or, more generally, a Banach space, and let $\mathcal{C}$ be a differentiability class. Given an arbitrary subset $C$ of $X$ and a collection $\mathcal{H}$ of affine hyperplanes of $X$ such that every $H\in\mathcal{H}$ passes through some point $x_{H}\in C$, and $C=\{x_H : H\in\mathcal{H}\}$,
what conditions on $\mathcal{H}$ are necessary and sufficient for the existence of a {\em convex} hypersurface $S$ of class $\mathcal{C}$ in $X$ such that $H$ is tangent to $S$ at $x_H$ for every $H\in\mathcal{H}$? }
\end{problem}

In \cite{GhomiJDG2001} M. Ghomi considered a version of this problem and solved it in the particular case that $S$ is an ovaloid (that is to say, a closed hypersurface of strictly positive curvature), $\mathcal{C}=C^{m}$, $m\geq 2$, and $C$ is a $C^m$ smooth submanifold of $\R^n$.

In \cite{AzagraMudarra2017PLMS}, as a consequence of a Whitney-type extension theorem for convex functions of the class $C^{1,1}$, we solved this problem in the case that $C$ is a compact subset of $\R^n$ and $\mathcal{C}=C^{1,1}$. A similar result was given in \cite[Corollary 1.5]{AMHilbert} for $X$ a Hilbert space, $C$ arbitrary, and $\mathcal{C}=C^{1,1}$; however, the proof of this corollary was incomplete in the case that $C$ is unbounded (we only sketched the proof of the {\em easy} implication, and overlooked one important difference between bounded and unbounded convex bodies). 

More recently, in \cite[Corollary 1.15]{AzagraMudarraGlobalGeometry}, we provided a solution to Problem \ref{main problem}, for arbitrary $C$, in the case that $X=\R^n$ and $\mathcal{C}=C^1$.

As far as we know, nothing is known about the case that $C$ is arbitrary and $\mathcal{C}=C^{m}$, $m\geq 2$, and in fact Problem \ref{main problem} looks extremely hard to solve in this generality. The main reasons why we consider this problem very difficult are the facts that: 1) partitions of unity cannot be used to patch local convex extensions, as they  destroy convexity; and 2) convex envelopes do not preserve smoothness of orders higher than $C^{1,1}$.

The aim of this paper is twofold. On the one hand, we wish to clarify what can be understood (and also what we think should be understood, if we want to be practical) by a convex hypersurface of class $C^{1,1}$ or $C^{1, \omega}$ (where $\omega$ is a modulus of continuity) in a Hilbert space, or more generally, in a Banach space. This question will keep us busy in Section \ref{sectionwhatisconvexhypersurfacec11}. On the other hand, we want to give a complete proof of \cite[Corollary 1.5]{AMHilbert}, and furthermore to extend this result to the class $C^{1, \omega}$ and to other Banach spaces. That is, we mean to provide a complete solution to Problem \ref{main problem} for $\mathcal{C}=C^{1, \omega}$. 

Of course, the solution to Problem \ref{main problem} will depend on the notion of convex hypersurface of class $\mathcal{C}$ with which we choose to work. However, as we will see in Section \ref{sectionwhatisconvexhypersurfacec11}, all reasonable notions of $C^{1,1}$ smoothness for a convex hypersurface in a Hilbert space are equivalent, and therefore we can give a precise statement of our main result in this particular case right now. Let us first notice that an equivalent reformulation of Problem \ref{main problem} is the following.

\begin{problem}\label{main problem 2}
Let $X$ be a Banach space, and let $\mathcal{C}$ be a differentiability class. Denote by $S_X$ the unit sphere of $X$.
Given a subset $C$ of $X$ and a mapping $N: C \to S_X$, what conditions are necessary and sufficient to ensure the existence of a (not necessarily bounded) convex body $V$ of class $\mathcal{C}$ such that $C \subseteq \partial V$ and the outer unit normal to $\partial V$ coincides  on $C$ with the given mapping $N$?
\end{problem}

One of the main results of this paper is the following.

\begin{theorem}\label{main theorem for C11 Hilbert}
Let $C$ be a subset of a Hilbert space $X$, and let $N:C\to S_X$ be a mapping. Then the following statements are equivalent.
\begin{enumerate}
\item There exists a $C^{1,1}$ convex body $V$ such that $C\subseteq \partial V$ and $N(x)$ is outwardly normal to $\partial V$ at $x$ for every $x\in C$.
\item There exists some $r>0$ such that
$$
\langle N(y), y-x\rangle \geq \tfrac{r}{2} \|N(y)-N(x)\|^2  \quad \textrm{for all} \quad x, y\in C.
$$
\end{enumerate}
Moreover, if $(2)$ is satisfied with a constant $r>0$, then the body $V$ in $(1)$ can be taken so that the outward unit normal $N_{\partial V}:\partial V\to S_X$ is $r^{-1}$-Lipschitz.

In addition, if we further assume that $C$ is bounded then $V$ can be taken to be bounded as well.
\end{theorem}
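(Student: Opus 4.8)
The plan is to reduce Theorem \ref{main theorem for C11 Hilbert} to a Whitney-type extension theorem for $C^{1,1}$ convex functions. The equivalence (1) $\Leftrightarrow$ (2) should follow by associating to the pair $(C,N)$ a $1$-jet and checking the relevant compatibility condition. The easy direction (1) $\Rightarrow$ (2) goes as follows: if $V$ is a $C^{1,1}$ convex body with outer unit normal $N_{\partial V}$ that is $L$-Lipschitz on $\partial V$, write $d$ for the signed distance to $\partial V$ (negative inside $V$). Then $d$ is convex, of class $C^{1,1}$ on a neighborhood of $\partial V$, with $\nabla d = N_{\partial V}$ on $\partial V$. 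Convexity of $d$ together with $\|\nabla d(y)-\nabla d(x)\|\le L\|y-x\|$ gives, for $x,y\in C$,
$$
\langle N(y),y-x\rangle \;=\; \langle \nabla d(y),y-x\rangle \;\geq\; d(y)-d(x) + \tfrac{1}{2L}\|\nabla d(y)-\nabla d(x)\|^2 \;=\; \tfrac{1}{2L}\|N(y)-N(x)\|^2,
$$
using $d=0$ on $C$ and the standard inequality for convex $C^{1,1}$ functions; so (2) holds with $r=1/L$.

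For the hard direction (2) $\Rightarrow$ (1), the plan is: set $f(x)=0$ and $Df(x)=N(x)$ for $x\in C$, and observe that condition (2) is exactly the compatibility inequality (call it (CW$^{1,1}$)) needed for the jet $(f,Df)$ on $C$ to admit a convex $C^{1,1}$ extension $F:X\to\R$ with $\mathrm{Lip}(\nabla F)\le 1/r$; this is precisely where I invoke the convex $C^{1,1}$ Whitney extension theorem of \cite{AMHilbert} (and its $\R^n$ version \cite{AzagraMudarra2017PLMS}). The issue is that the zero set of $F$ need not be a hypersurface and $F$ need not be coercive, so the sublevel set $V_0=\{F\le 0\}$ may be degenerate (empty interior, or all of $X$, or not closed with the right normals). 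The remedy — and this is the delicate point alluded to in the Introduction regarding unbounded bodies — is to modify $F$: replace $F$ by $F-\varepsilon$ for small $\varepsilon>0$, or rather build an auxiliary convex $C^{1,1}$ function that is nonnegative, vanishes exactly where we want, and is "coercive enough" in the directions transverse to $C$, so that $V=\{x : F(x)\le 0\}$ is a genuine convex body with $\partial V\supseteq C$ and outer normal $N$. One concretely does this by constructing, near $C$, the function via the signed-distance-type envelope used in \cite{AMHilbert}, and controlling its behavior at infinity using that the normals $N(x)$, constrained by (2), cannot "wrap around."

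The main obstacle I anticipate is exactly the unbounded case: ensuring that $\{F\le 0\}$ is non-degenerate and that its boundary is a $C^{1,1}$ hypersurface containing $C$ with the prescribed normals, rather than merely a convex set whose boundary might only partially realize $N$. In the bounded case this is much easier: if $C$ is bounded, then by (2) with a fixed $r$ one can first extend $(0,N)$ to a convex $C^{1,1}$ function $F$ and then add a large multiple of $\|x-x_0\|^2$-type term (suitably smoothed, or use that on a large ball $F$ can be corrected) to force coercivity without disturbing the jet on $C$; then $V=\{F\le \min F + \text{const}\}$, appropriately chosen so that $F=0$ on $C$ lies on the boundary, is a bounded $C^{1,1}$ convex body, giving the final ``in addition'' clause. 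So the last sentence of the theorem will come essentially for free once the unbounded construction is in place, by the coercivity-adjustment argument. I would therefore organize the proof by first doing (1) $\Rightarrow$ (2), then treating the convex-function extension step abstractly via the cited theorems, and finally spending most of the effort on passing from the extended function to a bona fide convex body — isolating the bounded case as a short corollary of the general construction.
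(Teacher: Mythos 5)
Your proposal for $(2)\Rightarrow(1)$ has a genuine gap precisely at the step you defer: passing from the convex $C^{1,1}$ extension $F$ of the jet $(0,N)$ (or $(1,N)$) to a convex body that is $C^{1,1}$ \emph{in the sense required by the theorem}, namely with a \emph{globally} Lipschitz outer unit normal, and moreover with $\lip(N_{\partial V})\leq r^{-1}$. Taking $V=\{F\leq 0\}$ does give a convex body with $C\subseteq\partial V$ (nondegeneracy is automatic once $C\neq\emptyset$, since $\nabla F=N\neq 0$ on $C$ forces $\min F<0$ and $\nabla F\neq 0$ on the level set), but the unit normal of $\partial V$ is $\nabla F/\|\nabla F\|$, and the Whitney-type extension theorem gives no positive lower bound for $\|\nabla F\|$ on $F^{-1}(0)$ away from $C$; in the unbounded case $\|\nabla F\|$ can decay along the boundary at infinity, so $N_{\partial V}$ need not be Lipschitz at all — this is exactly the oversight in \cite[Corollary 1.5]{AMHilbert} that the present paper sets out to repair, and neither ``replace $F$ by $F-\varepsilon$'' nor an unspecified coercivity correction addresses it (adding a term like a multiple of $\|x-x_0\|^2$ also destroys the prescribed gradient on $C$, and even with a lower bound $\|\nabla F\|\geq\rho$ on the level set, normalizing only yields a Lipschitz constant of order $\lip(\nabla F)/\rho$, not the sharp $r^{-1}$ of the ``moreover'' clause). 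The paper avoids the function-extension route altogether here: it sets $V:=\overline{\co}\big(\bigcup_{y\in C}B(y-rN(y),r)\big)$, shows $C\subseteq\partial V$ directly from inequality $(2)$, proves via a weak-compactness argument that balls of radius $r$ roll freely inside $V$ at \emph{every} boundary point (Claim \ref{claimballrollsfreelysuficiency}), deduces that $\mu_V$ is differentiable on $\partial V$ by comparison with the gauge of the rolling ball, and then invokes Theorem \ref{characterization of C11 convex bodies} $(2)\Rightarrow(1)$ to get $\lip(N_{\partial V})\leq r^{-1}$; boundedness of $V$ for bounded $C$ is then immediate from this construction. (The additive-correction idea does appear in the paper, but only in the $C^{1,\omega}$ setting of Theorem \ref{main theorem for C1omega Hilbert}, where the term $\delta^{-1}\varphi^{*}(\delta)\varphi(d_A)$ is built from the convex hull $A$ of $C\cup\{y-\omega^{-1}(\delta)N(y)\}$ specifically so as to preserve the jet on $C$ and to bound $\|\nabla F\|$ from below on the level set — and there no sharp constant is claimed.)

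A secondary caveat: in your $(1)\Rightarrow(2)$ argument you apply the inequality $f(x)\geq f(y)+\langle\nabla f(y),x-y\rangle+\tfrac{1}{2L}\|\nabla f(x)-\nabla f(y)\|^2$ to the signed distance $b_V$, but $b_V$ is $C^{1,1}$ only on the region $\{b_V>-\lip(N_S)^{-1}\}$ (it need not even be differentiable deep inside $V$ in infinite dimensions), and the usual proof of that inequality takes a gradient-descent step that may leave this region, so the constant is not immediate. This is easily repaired — either use the globally defined convex function $F$ of Theorem \ref{characterization of C11 convex bodies} $(4)$ with $\lip(\nabla F)\leq L$, or argue geometrically via the rolling-ball property as in the paper's proof of $(2)\Rightarrow(3)$ of that theorem — but as written the sharp constant $r=1/L$ is not justified.
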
 

An equivalent reformulation of this result which was suggested to us by Arie Israel is the following {\em finiteness principle} for Problem \ref{main problem}.

\begin{theorem}\label{main theorem for C11 Hilbert, finiteness principle version}
Let $C$ be a subset of a Hilbert space $X$, and let $\mathcal{H}$ be a collection of affine hyperplanes of $X$ such that every $H\in\mathcal{H}$ passes through some point $x_H\in C$, and $C=\{x_H : H\in\mathcal{H}\}$. The following statements are equivalent:
\begin{enumerate}
\item There exists a convex hypersurface $S$ of class $C^{1,1}$ in $X$ such that $S$ has bounded principal curvatures and $H$ is tangent to $S$ at $x_H$ for every $H\in\mathcal{H}$.
\item There exists some $M>0$ such that, for every couple $H_1, H_2$ of hyperplanes in $\mathcal{H}$, there exists a convex hypersurface $S(H_1, H_2)$ of class $C^{1,1}$ such that the principal curvatures of $S(H_1, H_2)$ are bounded by $M$ and $S(H_1, H_2)$ is tangent to $H_1$ and $H_2$ at $x_{H_1}$ and $x_{H_2}$, respectively.
\end{enumerate}
\end{theorem}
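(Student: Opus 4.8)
The plan is to derive Theorem~\ref{main theorem for C11 Hilbert, finiteness principle version} from Theorem~\ref{main theorem for C11 Hilbert} together with the equivalence, established in Section~\ref{sectionwhatisconvexhypersurfacec11}, between a $C^{1,1}$ convex hypersurface whose principal curvatures are bounded by $M$ and the boundary of a $C^{1,1}$ convex body whose outer unit normal map is $M$-Lipschitz. First a preliminary observation: since the tangent hyperplane to a $C^{1}$ convex hypersurface at one of its points is unique, each of the statements $(1)$ and $(2)$ forces the correspondence $H\mapsto x_H$ to be injective, so we may write $\mathcal{H}=\{H_x : x\in C\}$, where $H_x$ is the unique member of $\mathcal{H}$ passing through $x$. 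The implication $(1)\Rightarrow(2)$ is then immediate: the principal curvatures of the hypersurface $S$ in $(1)$ are bounded by some $M>0$, and $S(H_1,H_2):=S$ does the job for every couple $H_1,H_2$.

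For $(2)\Rightarrow(1)$ I would proceed as follows. Translating $(2)$ through Section~\ref{sectionwhatisconvexhypersurfacec11}, for every pair $x,y\in C$ we are given a $C^{1,1}$ convex body $V_{x,y}$ with $M$-Lipschitz outer unit normal such that $x,y\in\partial V_{x,y}$ and such that $H_x,H_y$ are the supporting hyperplanes of $V_{x,y}$ at $x$ and at $y$. Let $\nu_{x,y}$ and $\nu_{y,x}$ denote the outer unit normals of $V_{x,y}$ at $x$ and at $y$; each is one of the two unit vectors normal to the corresponding hyperplane. Using $V_{x,y}$ itself as a witness and applying Theorem~\ref{main theorem for C11 Hilbert} to the two-point set $\{x,y\}$, we obtain
\begin{equation*}
\langle \nu_{y,x},\, y-x\rangle \ \ge\ \tfrac{1}{2M}\,\|\nu_{y,x}-\nu_{x,y}\|^{2} \qquad\text{and}\qquad \langle \nu_{x,y},\, x-y\rangle \ \ge\ \tfrac{1}{2M}\,\|\nu_{x,y}-\nu_{y,x}\|^{2}
\end{equation*}
for every $x,y\in C$. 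Next I would assemble a single unit normal map $N:C\to S_X$ out of the $\nu_{x,y}$: since $V_{x,y}$ is convex, contains $y$, and has $H_x$ supporting it at $x$, the vector $\nu_{x,y}$ must be the unit normal to $H_x$ that points away from $y$, so that --- once one knows that all of $C$ lies on one closed side of $H_x$ --- $\nu_{x,y}$ is actually independent of $y$, and we may set $N(x):=\nu_{x,y}$ for any $y\neq x$. With this $N$, the displayed inequalities are precisely hypothesis $(2)$ of Theorem~\ref{main theorem for C11 Hilbert} with $r=1/M$, so that theorem yields a $C^{1,1}$ convex body $V$ with $C\subseteq\partial V$, with $N(x)$ outwardly normal to $\partial V$ at $x$, and with $M$-Lipschitz outer unit normal. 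Because $V$ is of class $C^{1}$, its (unique) supporting hyperplane at $x$ is the hyperplane through $x$ orthogonal to $N(x)$, namely $H_x$; hence $S:=\partial V$ is tangent to $H_x$ at $x$ for each $x\in C$, and by Section~\ref{sectionwhatisconvexhypersurfacec11} it has principal curvatures bounded by $M$. This proves $(1)$.

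The step I expect to be the main obstacle is the globalization of the normal map, that is, showing that the orientation at $x$ selected by a couple $\{x,y\}$ does not depend on the partner $y$; equivalently, that the pairwise hypothesis $(2)$ forces each $H_x$ to leave $C$ on one of its two closed sides. This is exactly the point at which the orientation-free formulation of Theorem~\ref{main theorem for C11 Hilbert, finiteness principle version} departs from the formulation with a prescribed normal map in Theorem~\ref{main theorem for C11 Hilbert}, and it is where the pairwise hypothesis must be exploited in full --- a priori the bodies $V_{x,y}$ and $V_{x,z}$ could sit on opposite sides of $H_x$. Once this consistency is secured, the rest is bookkeeping and an appeal to Theorem~\ref{main theorem for C11 Hilbert} and to the dictionary of Section~\ref{sectionwhatisconvexhypersurfacec11}.
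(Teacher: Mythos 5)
Your overall strategy is the intended one: the paper gives no separate proof of Theorem~\ref{main theorem for C11 Hilbert, finiteness principle version} (it is presented as a reformulation of Theorem~\ref{main theorem for C11 Hilbert}), and your bookkeeping --- the two--point inequalities with constant $\tfrac{1}{2M}$ coming from Theorem~\ref{characterization of C11 convex bodies}, the application of Theorem~\ref{main theorem for C11 Hilbert} with $r=1/M$, and the identification of the tangent hyperplane of the resulting body at $x$ with $H_x$ --- is correct. However, the step you yourself call ``the main obstacle'', namely that the orientation $\nu_{x,y}$ selected at $x$ by the couple $\{x,y\}$ does not depend on $y$ (equivalently, that the pairwise hypothesis forces all of $C$ onto one closed side of each $H_x$), is left unproved, and it is precisely there that a proof is needed; as written, your text is a plan with a hole at the decisive point.

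Moreover, that step cannot be carried out from statement $(2)$ taken literally. Take $X=\R^2$, $C=\{(0,0),(0,10),(0,-10)\}$, and let $\mathcal{H}$ consist of the horizontal lines $\{v=0\}$, $\{v=10\}$, $\{v=-10\}$ through these points. For the couple $\{(0,0),(0,10)\}$ the circle of radius $5$ centred at $(0,5)$ is a $C^\infty$ convex hypersurface with curvature $1/5$ tangent to the two lines at the two prescribed points; the couple $\{(0,0),(0,-10)\}$ is handled by the circle centred at $(0,-5)$, and the couple $\{(0,10),(0,-10)\}$ by the circle of radius $10$ centred at the origin. Hence the pairwise condition holds with $M=1/5$, with connected, bounded, smooth witnesses. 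Yet no single convex hypersurface $S=\partial V$ can be tangent to $\{v=0\}$ at the origin while containing both $(0,10)$ and $(0,-10)$: the tangent hyperplane of a $C^1$ convex hypersurface supports $V$, so $V$ would have to lie in one closed half-plane of $\{v=0\}$ although it contains points on both open sides. So the bodies $V_{x,y}$ and $V_{x,z}$ genuinely can sit on opposite sides of $H_x$, and the consistency you hoped to ``secure'' is simply not a consequence of $(2)$ as stated. The equivalence with Theorem~\ref{main theorem for C11 Hilbert} must therefore be read with the orientation built into the data, as in Problem~\ref{main problem 2}: one should require in $(2)$ that the surfaces $S(H_1,H_2)$ can be chosen so that their outer unit normal at each $x_H$ is a fixed vector $N(x_H)$ independent of the couple (equivalently, one prescribes $N:C\to S_X$ from the outset). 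Under that reading your reduction closes immediately; without it, the globalization step you flagged fails and no argument can supply it.
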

By saying that the principal curvatures of a $C^{1,1}$ convex body $V$ are bounded by some constant $M\geq 0$ we mean that the Lipschitz constant of the Gauss map $N_{\partial V}:\partial V\to S_X$ is bounded by $M$. This terminology is natural enough, since for a $C^2$ convex body $W$ the principal curvatures of $\partial W$ at a point $x\in\partial W$ are the eigenvalues of the differential of $N_{\partial W}$ at $x$, and these eigenvalues are bounded by $\textrm{Lip}(N_{\partial W})$. In our setting $N_{\partial V}$ may not be differentiable at some points (but it will be almost everywhere differentiable if $X=\R^n$, thanks to Rademacher's theorem).

For the precise statements of our main results in the cases that $\mathcal{C}=C^{1, \omega}$, where $\omega$ is a modulus of continuity, or that $X$ is a superreflexive Banach space, see Section \ref{sectionmainresults}.


\section{What is a convex hypersurface of class $C^{1,1}$?}\label{sectionwhatisconvexhypersurfacec11}

There is no controversy about what a convex hypersurface is. At least in the case $X=\R^n$, the following definition is accepted (explicitly or implicitly) everywhere in the literature.

\begin{definition}
{\em Let $X$ be a Banach space, and $S$ be a subset of $X$. We will say that $X$ is
a convex hypersurface in $X$ provided that there exists a closed convex set $V$ with nonempty interior such that $S=\partial V$.
}
\end{definition}
Such a set $V$ is sometimes called a {\em convex body}. However, some authors prefer to call such a set $V$ a convex body only if $V$ is bounded too. That is, for some authors a convex body is always bounded, while others indulge in dealing with {\em unbounded} convex bodies as well. {\bf In this paper, convex bodies are allowed to be unbounded.}

As for the $C^m$ regularity ($m\in\N$) of a convex hypersurface $S=\partial V$, there can be no dispute either. The most natural definition from a geometrical point of view is that $S$ be a submanifold of $\R^n$ of class $C^m$, and this happens to be equivalent to one of the most practical analytical definitions, namely, that the Minkowski functional (or gauge) of $V$, denoted by $\mu_V$, be of class $C^m$ on $X\setminus\mu_{V}^{-1}(0)$. Recall that, given a convex body $V$ in $X$, up to a translation we may always assume that $0\in\textrm{int}(V)$, and then define the Minkowski functional of $V$ by
$$
\mu_V(x)=\inf \left\lbrace t>0 : \frac{1}{t}x\in V \right\rbrace.
$$

However, for the regularity class $C^{1,1}$, these two definitions are no longer equivalent and, what is worse, in the literature there seems to be no universal agreement about what a hypersurface of class $C^{1,1}$ is. Some authors say that a submanifold $M$ of $\R^n$ is of class $C^{1,1}$ provided that $M$ is {\em locally} of class $C^{1,1}$ (meaning, for instance, that a normal to $M$ is locally Lipschitz), while other authors demand that a uniform Lipschitz constant should exist. These definitions are equivalent in the case that $V$ is bounded. Therefore, there cannot be any disagreement, either, about what a {\em bounded} convex body of class $C^{1,1}$ is.

We are thus left with the following questions: what is an {\em unbounded} convex body of class $C^{1,1}$? And, more generally, what is a (not necessarily convex) hypersurface of class $C^{1,1}$?

In this paper we will make a distinction between hypersurfaces of class $C^{1,1}$ and hypersurfaces of class $C^{1,1}_{\textrm{loc}}$.

\begin{definition}\label{definition of C11 hypersurface}
{\em Let $M$ be a $C^1$ hypersurface of $\R^n$ or, more generally, of a Hilbert space $(X, \|\cdot\|)$, and let $N:M\to S_X$ be a continuous unit normal. We will say that $M$ is of class $C^{1,1}$ provided that $N$ is Lipschitz continuous (with respect to the ambient distance), that is to say, there exists some constant $L>0$ such that
$$
\|N(x)-N(y)\|\leq L\|x-y\|
$$
for all $x, y\in M$.

We will say that $M$ is of class $C^{1,1}_{\textrm{loc}}$ whenever this condition holds locally, that is, for every $z\in M$ there exists some positive numbers $r, L$, depending on $z$, such that
$$
\|N(x)-N(y)\|\leq L\|x-y\|
$$
for all $x, y\in B(z, r)$.}
\end{definition} 

We will use a similar terminology for functions. A function $F:X\to\R$ will be said to be of class $C^{1,1}$ provided $F\in C^{1}(X)$ and the gradient $\nabla F$ is (globally) Lipschitz. If $\nabla F$ is locally Lipschitz then we will say that $F$ is of class $C^{1,1}_{\textrm{loc}}$.

\medskip

In $\R^n$ it is well known that a convex hypersurface $M=\partial V$ is of class $C^{1,1}$ if and only if there exists an $r>0$ such that the balls of radii $r$ inside $V$ roll freely on $M$. This means that for every $x\in M$ there exists a ball $B(z,r)\subset V$ such that  $\partial B(z,r)\cap M=\{x\}$; of course in this case we have $z=x- rN(z)$, where $N:\partial V\to S_X$ is the outer unit normal;  see \cite{Lucas, GhomiHoward} and the references therein for instance. It is also known, see \cite{DelfourZolesio, KrantzParks}, that if $M=\partial A$ is the boundary of a proper open subset $A$ of $\R^n$, then $M$ is of class $C^{1,1}$ if and only if there exists some $r>0$ such that the signed distance to $M$, defined by
$$
b_A(x)  =  \left\lbrace
	\begin{array}{ccl}
	d(x,A) & \mbox{ if } & x\in X \setminus A\\
	0  & \mbox{ if } & x\in \partial A \\
	- d(x,\partial A) & \mbox{ if } & x\in \interior(A),
	\end{array}
	\right. 
$$
is of class $C^{1,1}$ on the set $\{x\in\R^n : \textrm{dist}(x, \partial A)<r\}$. In the case that $A$ is convex, this fact allows us to realize $M$ as a level set of a $C^{1,1}$ convex function defined on $\R^n$. This kind of representation becomes very useful when we want to transfer smooth extension results from convex functions to convex bodies, and the other way around.

Unfortunately, the proofs of these finite-dimensional results do not immediately extend to Hilbert spaces, mainly due to the following fact: if $A$ is an open {\em convex} subset of an infinite-dimensional Hilbert space and $x\in \textrm{int}(A)$, the distance of $x$ to $\partial A$ may not be attained. An instance of this situation is provided by the following.
\begin{ex}
{\em Let $X$ be a separable Hilbert space of infinite dimension, and let us denote an orthonormal basis of $X$ by $\{e_n\}_{n\in\N}$. Define
$$
W=\left\{x\in X \, : \, \sum_{n=1}^{\infty}\frac{\langle e_n, x\rangle ^2}{(1+ 2^{-n})^2}\leq 1\right\}.
$$
Then $W$ is a bounded convex body, and clearly we have that
$$
d(0, \partial W)=1.
$$
However, it is not difficult to see that the closed ball $B(0,1)$ is contained in the interior of $W$, hence this distance is not attained.}
\end{ex}

Our aim in this section is to show that these results remain nonetheless true for convex bodies in Hilbert spaces, thus providing several equivalent definitions of $C^{1,1}$ smoothness for (possibly unbounded) convex bodies. We do not claim that the following two results are completely original. As a matter of fact, some of the properties and implications of these two results are proved, in a more general setting, in the papers \cite{ClarkeSternWolenski, PoliquinRockafellarThibault}, which explore the notion of {\em proximally smooth sets}; for instance see \cite[Theorems 4.1 and 4.8]{ClarkeSternWolenski}, and \cite[Theorem 4.1]{PoliquinRockafellarThibault}. What seems to be new in our results of this section is the equivalence of $(3)$, $(4)$ and $(5)$ of Theorem \ref{characterization of C11 convex bodies} below, and perhaps also $(1)$ of Theorem \ref{regularity of the signed distance} (for which we have been unable to find a reference). We could have limited ourselves to proving what we think is new or could not find in the literature, providing a reference for what is known, but we chose to present a self-contained proof of these results which does not rely on the less elementary notions and tools of the papers \cite{ClarkeSternWolenski, PoliquinRockafellarThibault}.

\begin{theorem}[Regularity of the signed distance to the boundary of a $C^{1,1}$ convex body]\label{regularity of the signed distance}
Let $V$ be a convex body of class $C^{1,1}$ in a Hilbert space $X$. Let us denote the signed distance to $\partial V$ by $b_V$, and the outer unit normal to $S:=\partial V$ by $N_S: S\to S_{X}$. Then the following properties are satisfied:
\begin{enumerate}
\item If $x \in X$ is such that $b_V(x)>- \lip(N_S)^{-1},$ then the distance $\dist(x, S)$ is attained at a unique point, which we will denote by $P_{S}(x),$ and $x-P_S(x) = b_V(x) N_S(P_S(x)).$ 
\item For every $\varepsilon \in (0,1)$, the mapping $P_S: \lbrace z\in X \: :\: b_V(z) \geq -\lip(N_S)^{-1} \rbrace \to S$ satisfies
$$
\langle P_S(x)-P_S(y), x- y \rangle \geq \varepsilon \|  P_S(x)-P_S(y) \|^2 \quad \text{for every} \quad x,y\in U_\varepsilon, \quad \text{where} 
$$
$$
U_\varepsilon:=\lbrace z\in X \: :\: b_V(z) \geq - (1-\varepsilon)\lip(N_S)^{-1} \rbrace.
$$
In particular, $P_S$ is Lipschitz on $U_\varepsilon$ with $\lip(P_S, U_\varepsilon) \leq \frac{1}{\varepsilon}.$ 
\item The function $b_V$ is Fr\'echet differentiable at every point $x\in X$ such that $b_V(x)>- \lip(N_S)^{-1},$ with $\nabla b_V(x)=N_S\left(P_{S}(x) \right).$ In particular, $\nabla b_V= N_S$ on $S=\partial V$. Moreover, $\nabla b_V$ is Lipschitz on each $U_\varepsilon,$ with $\lip(\nabla b_V,U_\varepsilon) \leq \tfrac{1}{\varepsilon} \lip(N_S)$, for every $\varepsilon \in (0,1).$
\item The function $b_V$ is convex on $X.$
\end{enumerate}
\end{theorem}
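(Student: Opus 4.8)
The plan is to reduce the four statements to a single \emph{interior rolling ball property}, which is the crux. Write $L=\lip(N_S)$; if $L=0$ then $N_S$ is constant, $V$ is a halfspace, and everything is trivial, so assume $L>0$ and set $r=L^{-1}$. For $p\in S$ let $\ell_p(y)=\langle N_S(p),y-p\rangle$, so that $V=\bigcap_{p\in S}\{\ell_p\le 0\}$ since $V$ is $C^1$. The key lemma is
\[
(\star)\qquad \bar B\big(p-rN_S(p),\,r\big)\subseteq V\ \text{ for every }p\in S,\qquad\text{i.e.}\qquad \langle N_S(q),q-p\rangle\ \ge\ \tfrac r2\,\|N_S(p)-N_S(q)\|^2\quad\forall\,p,q\in S,
\]
the two formulations being equivalent by the one-line computation $\bar B(c,r)\subseteq V\iff\ell_q(c)\le-r$ for all $q\in S$, applied to $c=p-rN_S(p)$. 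This is exactly statement $(2)$ of Theorem~\ref{main theorem for C11 Hilbert} with the constant $r$, i.e.\ the ``easy'' implication of that theorem. For \emph{bounded} $V$ I would obtain it by approximating $V$ in the Hausdorff metric by $C^2$ convex bodies with curvatures bounded by $L+o(1)$ and invoking the classical Blaschke rolling theorem; for \emph{unbounded} $V$ — the case whose proof was incomplete in \cite{AMHilbert} — one must reduce to the bounded case by exhausting $V$ by bounded $C^{1,1}$ convex bodies that coincide with $V$ on larger and larger pieces of $S$ and whose Gauss maps are $(L+o(1))$-Lipschitz (truncate $V$ with a large ball and smooth the resulting edges), and then let these pieces fill $S$. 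Making this reduction precise is the delicate point of the whole section, and I expect it to be the main obstacle.

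Granting $(\star)$, I would establish the statements in the order $(4),(1),(2),(3)$. For $(4)$ — which in fact uses only $V\in C^1$ — one verifies that $b_V=\sup_{p\in S}\ell_p$ on $X$: if $x\notin V$ the metric projection $\pi_V(x)\in S$ onto the closed convex set $V$ gives $x-\pi_V(x)=b_V(x)N_S(\pi_V(x))$ (the normal cone of $V$ at $\pi_V(x)$ is $\R_{\ge0}N_S(\pi_V(x))$), so $\ell_{\pi_V(x)}(x)=b_V(x)$, while $\ell_p(x)=\langle N_S(p),x\rangle-h_V(N_S(p))\le\langle N_S(p),x-\pi_V(x)\rangle\le\dist(x,V)$ for all $p$; if $x\in\interior V$ then $-\ell_p(x)=\dist\big(x,\{\ell_p=0\}\big)\ge\dist(x,\partial V)$ for all $p$, with equality approached along any minimizing sequence $p_k\in\partial V$ (since $\dist(x,\partial V)\le-\ell_{p_k}(x)\le\|x-p_k\|$); and $x\in\partial V$ is trivial. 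Being a supremum of affine functions with unit-norm gradients, $b_V$ is therefore convex and $1$-Lipschitz on $X$.

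For $(1)$, uniqueness of the nearest point needs only $V\in C^1$ and $N_S$ Lipschitz: if $x\in\interior V$ with $b_V(x)=-d$, $0<d<r$, optimality on the $C^1$ hypersurface $\partial V$ forces $x-p=-dN_S(p)$ at any nearest point $p$, whence two nearest points would satisfy $p_1-p_2=d(N_S(p_1)-N_S(p_2))$ and $\|p_1-p_2\|^2\le dL\|p_1-p_2\|^2=(d/r)\|p_1-p_2\|^2$, so $p_1=p_2$ (and for $x\notin\interior V$ the unique nearest point is $\pi_V(x)$). Existence is where $(\star)$ is used: for $x\notin\interior V$ put $P_S(x)=\pi_V(x)$, and for $x\in\interior V$ with $d<r$, choose $\rho\in(d,r)$ and work with the inner parallel body $V_\rho=\{y:\bar B(y,\rho)\subseteq V\}$, which by $(\star)$ is a nonempty closed convex set not containing $x$, with $\partial V_\rho=\{p-\rho N_S(p):p\in S\}$ and itself $C^{1,1}$ (the map $p\mapsto p-\rho N_S(p)$ is a bi-Lipschitz homeomorphism $S\to\partial V_\rho$, since $\|(p_1-\rho N_S(p_1))-(p_2-\rho N_S(p_2))\|\ge(1-\rho L)\|p_1-p_2\|$, and the outer normal at $p-\rho N_S(p)$ is $N_S(p)$); projecting $x$ onto $V_\rho$ then produces $\pi_{V_\rho}(x)=p-\rho N_S(p)$ for some $p\in S$, and since $x-\pi_{V_\rho}(x)$ points along $N_S(p)$ one gets $x=p-\big(\rho-\|x-\pi_{V_\rho}(x)\|\big)N_S(p)$, identifying $p$ as the nearest point of $S$ to $x$. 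In every case $x-P_S(x)$ is normal to $\partial V$ at $P_S(x)$, hence a multiple of $N_S(P_S(x))$, and reading off its length and sign yields $x-P_S(x)=b_V(x)N_S(P_S(x))$.

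Finally $(2)$ and $(3)$ are short. For $(2)$, put $p=P_S(x)$, $q=P_S(y)$, $s=b_V(x)$, $t=b_V(y)$; by $(1)$, $\langle p-q,x-y\rangle=\|p-q\|^2+s\alpha+t\beta$ with $\alpha=\langle N_S(p),p-q\rangle$ and $\beta=\langle N_S(q),q-p\rangle$. Now $(\star)$ gives $\alpha,\beta\ge\frac r2\|N_S(p)-N_S(q)\|^2\ge0$, and since $\alpha+\beta=\langle N_S(p)-N_S(q),p-q\rangle\le\|N_S(p)-N_S(q)\|\,\|p-q\|$ together with $\|N_S(p)-N_S(q)\|\le L\|p-q\|$ one deduces $0\le\alpha,\beta\le\frac L2\|p-q\|^2$; on $U_\varepsilon$ we have $s,t\ge-(1-\varepsilon)r$, so $s\alpha+t\beta\ge-(1-\varepsilon)r\cdot L\|p-q\|^2=-(1-\varepsilon)\|p-q\|^2$ (each term being nonnegative when the corresponding scalar is), whence $\langle p-q,x-y\rangle\ge\varepsilon\|p-q\|^2$, and Cauchy--Schwarz gives $\lip(P_S,U_\varepsilon)\le\varepsilon^{-1}$. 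For $(3)$: $b_V$ is convex by $(4)$, and $\ell_{P_S(x)}$ is an affine minorant of $b_V$ touching it at $x$ (because $\ell_{P_S(x)}(x)=\langle N_S(P_S(x)),b_V(x)N_S(P_S(x))\rangle=b_V(x)$), so $N_S(P_S(x))\in\partial b_V(x)$; if $b_V(x)>-r$, then with $p=P_S(x)$, $n=N_S(p)$ one has $b_V(p+\tau n)=\tau$ for all $\tau\in[-r,\infty)$ (for $\tau\ge0$ because $\pi_V(p+\tau n)=p$, for $\tau\in[-r,0)$ because $(\star)$ gives $\bar B(p+\tau n,|\tau|)\subseteq V$), and plugging $p+\tau n-x=(\tau-b_V(x))n$ into the subgradient inequality $b_V(p+\tau n)\ge b_V(x)+\langle\xi,p+\tau n-x\rangle$ shows $(\tau-b_V(x))(1-\langle\xi,n\rangle)\ge0$ for every $\xi\in\partial b_V(x)$ and every admissible $\tau$; taking $\tau$ above and then, since $b_V(x)>-r$, below $b_V(x)$ forces $\langle\xi,n\rangle=1$, hence $\xi=n$ by equality in Cauchy--Schwarz. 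Thus $\partial b_V(x)=\{N_S(P_S(x))\}$ on the open set $\{b_V>-r\}$, with single-valued selection $N_S\circ P_S$ that is locally Lipschitz there by $(2)$; therefore $b_V$ is Fr\'echet differentiable on $\{b_V>-r\}$ with $\nabla b_V=N_S\circ P_S$ (so $\nabla b_V=N_S$ on $S$), and $\lip(\nabla b_V,U_\varepsilon)\le L\,\lip(P_S,U_\varepsilon)\le\varepsilon^{-1}L$.
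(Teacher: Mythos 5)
The decisive gap is your key lemma $(\star)$ itself: you never prove it, and the route you sketch is not viable in the setting of the theorem. Blaschke's rolling theorem and Hausdorff approximation by $C^2$ convex bodies with curvatures bounded by $L+o(1)$ are finite-dimensional tools, while Theorem \ref{regularity of the signed distance} is stated in an arbitrary Hilbert space, where no such $C^2$ approximation with curvature control is available (integral convolution does not exist, and inf-convolution smoothing only yields $C^{1,1}$). The unbounded reduction is likewise not merely ``delicate'': truncating $V$ by a large ball and smoothing cannot produce bodies whose Gauss maps are globally $(L+o(1))$-Lipschitz, because smoothing the edge where $\partial V$ meets the sphere forces arbitrarily large curvature there; one would have to localize the rolling property, which is precisely the part you leave undone and yourself flag as the main obstacle. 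The paper never needs $(\star)$ as an input: it proves statement $(1)$ first, by an elementary minimizing-sequence argument (take $z_n\in S$ with $\|z_n-x_0\|\to\dist(x_0,S)$, show that $x_n:=z_n-\dist(x_0,S)\,N_S(z_n)\to x_0$ using the supporting hyperplane at $z_n$, and deduce that $(z_n)$ is Cauchy from the Lipschitz property of $N_S$), and only afterwards derives the rolling property and your inequality $(\star)$ from $(1)$ (Theorem \ref{characterization of C11 convex bodies}, $(1)\Rightarrow(2)\Rightarrow(3)$). Your logical order is the reverse of the paper's, and the ingredient you postulate is exactly the nontrivial content.

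A second, related gap sits inside your existence argument for $(1)$: you assert $\partial V_\rho=\{p-\rho N_S(p):p\in S\}$, but your bi-Lipschitz estimate only gives injectivity and closedness of the image; surjectivity (or at least that $\pi_{V_\rho}(x)$ lies in the image) is what you actually use, and it is not justified. A point $y\in\partial V_\rho$ satisfies $\dist(y,S)=\rho<\lip(N_S)^{-1}$, and identifying $y$ as $p-\rho N_S(p)$ amounts to attainment of $\dist(y,S)$ at some $p\in S$ --- that is, to statement $(1)$ itself, whose natural proof is the same Cauchy-sequence argument as in the paper; so the detour through the inner parallel body does not avoid the core work. Granting $(1)$ and $(\star)$, the rest of your proposal is correct and in places cleaner than the paper: the representation $b_V=\sup_{p\in S}\langle N_S(p),\cdot-p\rangle$ gives $(4)$ without the paper's segment-splitting argument, your single computation for $(2)$ treats all of $U_\varepsilon$ at once, and your subdifferential argument for $(3)$ replaces the appeal to Fitzpatrick's theorem. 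But as submitted the proof is incomplete at its foundation.
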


\begin{theorem}\label{characterization of C11 convex bodies}
Let $S$ be a convex hypersurface of a Hilbert space $X$; say $S=\partial V$, where $V$ is a closed convex body (not necessarily bounded). Assume that $S$ is a $C^1$ submanifold, so that the outer unit normal $N_S:S\to S_X$ is well defined. Then, the following statements are equivalent:
\begin{enumerate}
\item The mapping $N_S :S\to S_{X}$ is $L$-Lipschitz.
\item For every $0<r<1/L$, the balls of radii $r$ inside $V$  {\em roll freely on $S$,} meaning that for every $x\in S$ there exists a ball $B(z,r)\subset V$ such that $\partial B(z,r)\cap S=\{x\}$.
\item The mapping $N_S$ satisfies 
$$
\langle N_S(y), y-x \rangle \geq  \tfrac{1}{2L} \| N_S(x)-N_S(y)\|^2 \quad \text{for every} \quad x,y\in S.
$$
\item There exists a {\em convex} function $F:X\to\R$ of class $C^{1,1}$ such that $\lip(\nabla F) \leq L,$ $S=F^{-1}(1)$ and $\nabla F(x)=N_S(x)$ for every $x\in S$.
\end{enumerate}
Furthermore, if $V$ is bounded and $0\in \interior(V)$, then the above statements are also equivalent to:
\begin{enumerate}
\item[{(5)}] $\mu_V$ is of class $C^{1,1}$ on the set $\{x\in X: \mu_V(x) \geq \alpha\}$ for every $\alpha >0.$
\end{enumerate}
\end{theorem}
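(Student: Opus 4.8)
The plan is to prove the cyclic chain of implications $(4)\Rightarrow(1)\Rightarrow(3)\Rightarrow(2)\Rightarrow(1)$ together with a separate argument linking $(1)$--$(4)$ to $(4)$ via $b_V$, and finally to treat $(5)$ in the bounded case. The most natural bridge is the signed distance $b_V$, so I would begin by observing that $(1)$ implies, via Theorem \ref{regularity of the signed distance}, that $b_V$ is convex on $X$ and $C^{1,1}_{\mathrm{loc}}$ near $S$, and in fact $C^{1,1}$ on a slab of width $\mathrm{Lip}(N_S)^{-1}$ around $S$ with $\nabla b_V = N_S$ on $S$. To get a global $C^{1,1}$ convex function $F$ with $S = F^{-1}(1)$ as demanded in $(4)$, I would post-compose: set $F = \varphi\circ b_V$ for a suitable nondecreasing convex $C^{1,1}$ function $\varphi:\R\to\R$ with $\varphi(0)=1$, $\varphi'(0)=1$, chosen so that $\varphi$ is affine (slope $0$) on a left half-line past $-\tfrac12\mathrm{Lip}(N_S)^{-1}$ and affine (some fixed positive slope) on a right half-line, so that the composition is globally Lipschitz-gradient and still equals $N_S$ on $S$. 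The chain-rule computation and the verification that $\mathrm{Lip}(\nabla F)\le L$ is a routine but slightly delicate estimate (one must track how $\varphi$'s curvature interacts with $\mathrm{Lip}(\nabla b_V)$ on the slab). This gives $(1)\Rightarrow(4)$.

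For $(4)\Rightarrow(1)$: given such an $F$, the outer normal on $S=F^{-1}(1)$ is $N_S = \nabla F/\|\nabla F\|$, but since $\nabla F = N_S$ already has unit length on $S$, we get $\|N_S(x)-N_S(y)\| = \|\nabla F(x)-\nabla F(y)\|\le L\|x-y\|$ directly — so $(1)$ is immediate once we know $\nabla F \ne 0$ near $S$, which follows from $\nabla F = N_S$ on $S$ and continuity (and convexity prevents $\nabla F$ from vanishing on the level set of a nonconstant convex function). For $(1)\Leftrightarrow(3)$: the inequality in $(3)$ is exactly the Hilbert-space "cocoercivity"/monotonicity estimate equivalent to Lipschitzness of a gradient of a convex function, applied to $b_V$ (or to $F$ from $(4)$); one direction is the Cauchy--Schwarz-type argument using convexity of $b_V$ on $X$ together with $\nabla b_V = N_S$ on $S$, and the reverse is the standard fact that cocoercivity with constant $\tfrac{1}{L}$ of the gradient forces $\mathrm{Lip}\le L$. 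Here I would lean on Theorem \ref{regularity of the signed distance}(2)--(4) rather than redo the monotone-operator theory.

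For the rolling-balls equivalence $(1)\Leftrightarrow(2)$: assuming $(1)$, Theorem \ref{regularity of the signed distance}(1) says that for $x\in S$ and any $0<r<\mathrm{Lip}(N_S)^{-1}$ the point $z = x - rN_S(x)$ has $b_V(z) = -r$ and $\dist(z,S)$ is attained uniquely at $P_S(z) = x$, which is precisely the statement $\overline B(z,r)\subset V$ and $\partial B(z,r)\cap S = \{x\}$; so $(1)\Rightarrow(2)$. Conversely, if balls of radius $r$ roll freely, then for each $x\in S$ we have $B(x-rN_S(x),r)\subset V$, hence the supporting hyperplane at $x$ gives $\langle N_S(x), y-x\rangle \le 0$ for $y\in V$ and a ball estimate yields, for $x,y\in S$, a bound of the form $\langle N_S(x), y-x\rangle \le \tfrac{1}{2r}\|y-x\|^2$; symmetrizing in $x,y$ and using the parallelogram law produces $\|N_S(x)-N_S(y)\|\le \tfrac{1}{r}\|x-y\|$, giving $(1)$ with $L = 1/r$ for every $r<1/L$, hence the sharp constant. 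This elementary geometric estimate is the technical heart of $(2)\Rightarrow(1)$. Finally, for the bounded case $(5)$: with $0\in\interior(V)$ the sets $\{\mu_V\ge\alpha\}$ are "annular slabs" away from the origin, and on the compact sphere $\mu_V^{-1}(1)=S$ smoothness of $\mu_V$ is equivalent to $C^{1,1}$ smoothness of $S$ in the usual bounded sense; I would derive $(5)$ from $(4)$ by writing $\mu_V$ in terms of $F$ via the implicit function theorem along rays $t\mapsto F(tx)$ (which is strictly increasing since $\langle\nabla F(tx),x\rangle>0$ for $tx$ near $S$ by convexity), transferring the Lipschitz gradient bound, and conversely $(5)\Rightarrow(1)$ because $N_S = \nabla\mu_V/\|\nabla\mu_V\|$ is then Lipschitz on $S$.

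The step I expect to be the main obstacle is $(2)\Rightarrow(1)$: turning the purely geometric "interior rolling ball of every radius $<1/L$" hypothesis into the uniform Lipschitz bound for $N_S$ with the \emph{sharp} constant $L$, in a Hilbert space where distances to the boundary need not be attained and one cannot invoke Rademacher. One must carefully combine the supporting-ball inequalities at two points $x,y\in S$ and control cross terms by the parallelogram identity, letting $r\uparrow 1/L$ at the end; getting the constant right (rather than $2/r$ or similar) requires squaring and adding the two ball inequalities in just the right way.
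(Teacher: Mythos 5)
Your scheme has several genuine gaps. The most serious is $(2)\Rightarrow(1)$, which you yourself identify as the technical heart: the two scalar inequalities you propose to combine, namely $\langle N_S(x),x-y\rangle\ge 0$ (support) and $\langle N_S(x),x-y\rangle\le\tfrac{1}{2r}\|x-y\|^2$ (the statement that $y$ lies outside the open ball $B(x-rN_S(x),r)$), do not imply $\|N_S(x)-N_S(y)\|\le\tfrac1r\|x-y\|$ by any symmetrization or parallelogram identity: both inequalities constrain $N_S(x),N_S(y)$ only through their inner products with $x-y$, and two unit vectors nearly orthogonal to $x-y$ but far from each other satisfy them vacuously. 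One must use the full containment $B(x-rN_S(x),r)\subset V$ against the supporting hyperplane at $y$ at an auxiliary point — the paper tests $p:=x+r\bigl(N_S(y)-N_S(x)\bigr)\in B(x-rN_S(x),r)\subset V$, so that $\langle N_S(y),y-p\rangle\ge0$ yields the cocoercivity inequality of $(3)$, from which the Lipschitz bound follows by adding the two cocoercivity inequalities and Cauchy--Schwarz. Without this extra idea your $(2)\Rightarrow(1)$ step fails. A second gap is $(1)\Rightarrow(4)$: with your choice of $\varphi$ (slope $1$ at $0$, slope $0$ already at $-\tfrac12\lip(N_S)^{-1}$), the bound $\lip(\nabla F)\le L$ is false, not merely delicate: moving inward along a normal from $x_0\in S$ one has $\nabla F(x_0-tN_S(x_0))=\varphi'(-t)N_S(x_0)$, so $\lip(\nabla F)\ge\sup_t(1-\varphi'(-t))/t\ge 2L$. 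Even with the optimal choice (essentially $F=1-\tfrac{1}{2L}+\tfrac{L}{2}\dist(\cdot,\{b_V\le-1/L\})^2$) the naive product-rule estimate for $\varphi'(b_V)\nabla b_V$ gives roughly $2L$; the sharp constant requires a different argument, e.g.\ writing $\nabla F=L\,(I-P_K)$ with $K=\{b_V\le-1/L\}$ convex and invoking nonexpansiveness of $I-P_K$, which your plan does not contain. (The paper avoids all of this by deducing $(4)$ from $(3)$ via the $C^{1,1}$ convex extension theorem of \cite{AzagraMudarraExplicitFormulas}, which delivers $\lip(\nabla F)\le L$ directly; the remaining work there is only to show $F^{-1}(1)=S$, via the two-intersection-points fact for non-half-spaces.)

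There is also a misattribution that threatens circularity. Your $(1)\Rightarrow(2)$ cites Theorem \ref{regularity of the signed distance}~$(1)$ as asserting that $z=x-rN_S(x)$ satisfies $b_V(z)=-r$ and $P_S(z)=x$; that statement only gives existence and uniqueness of nearest points on the slab $\{b_V>-\lip(N_S)^{-1}\}$, and the facts you need (that $z\in\interior(V)$, that $\dist(z,S)=r$, that the nearest point is $x$) are precisely what the paper proves, by a separate contradiction argument, in its $(1)\Rightarrow(2)$ and in Claim \ref{claimprojectionanddistanceofinteriorpoints}. Similarly, your $(1)\Rightarrow(3)$ and $(1)\Rightarrow(4)$ lean on parts $(2)$--$(4)$ of Theorem \ref{regularity of the signed distance} (convexity of $b_V$, $\nabla b_V=N_S\circ P_S$, Lipschitzness on slabs), but in the paper those are proved \emph{using} the rolling-ball property, i.e.\ the implication $(1)\Rightarrow(2)$ of the present theorem; so your argument either becomes circular or silently requires redoing that work. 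Note also that the cocoercivity inequality for $b_V$ cannot be obtained by quoting the standard Baillon--Haddad-type estimate, since $b_V$ is only $C^{1,1}$ on a non-convex slab rather than globally; it is cleaner (and is what the paper effectively does) to get $(3)$ geometrically from $(2)$, and $(1)$ from $(3)$ or from the global function $F$ of $(4)$. The $(5)$ part of your outline is essentially the paper's computation and is fine in spirit, modulo the lower bound $\|\nabla\mu_V\|\ge R^{-1}$ on $S$ needed for $(5)\Rightarrow(1)$.
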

Therefore any of these conditions can be taken as the definition of a convex body of class $C^{1,1}$.

In the remainder of this section we will prove Theorems \ref{regularity of the signed distance} and \ref{characterization of C11 convex bodies}. We will first establish $(1)$ of Theorem \ref{regularity of the signed distance}, then we will turn to the proof of Theorem \ref{characterization of C11 convex bodies}, and finally return to proving $(2), \: (3)$ and $(4)$ of Theorem \ref{regularity of the signed distance}.

\medskip

\subsection*{Proof of Theorem \ref{regularity of the signed distance} $(1)$} If $x \in X \setminus \interior(V)$, because $V$ is closed and convex and the norm $\|\cdot\|$ is Hilbertian, we can find a unique point $P_S(x) \in \partial V= S$ such that $\| x-P_S(x)\| = \dist(x,S)$ and the mapping $ X \setminus \interior(V) \ni x \mapsto P_S(x)$ is $1$-Lipschitz. 

Let us now assume that $x_0\in \interior(V)$ with $r:=\dist(x,S)< \lip(N_S)^{-1}.$ We can find a sequence $(z_n)_n $ in $S$ such that $\lim_n \| z_n-x_0\| = r.$ If we define $x_n:= z_n- r N_S(z_n),$ we claim that $(x_n)_n$ converges to $x_0.$ Indeed, if $n$ is large enough so that $r> \frac{1}{n},$ the point $x_0+ (r-\frac{1}{n}) N_S(z_n)$ belongs to the interior of the ball $B(x_0,r)$ and then $x_0+ (r-\frac{1}{n}) N_S(z_n) \in V.$ The convexity of $V$ implies that
$$
\big \langle N_S(z_n), x_0 + (r-\tfrac{1}{n})N_S(z_n) -z_n \big \rangle \leq 0.
$$
This allows us to write
\begin{align*}
&\|x_n-x_0\|^2 = \| z_n-x_0-r N_S(z_n) \|^2 = \| z_n-z_0\|^2 + r^2 \| N_S(z_n) \|^2 + 2r \big \langle N_S(z_n), x_0-z_n \big \rangle \\
& = \| z_n-z_0\|^2 + r^2  + 2r \big \langle N_S(z_n), x_0+(r-\tfrac{1}{n})N_S(z_n)-z_n \big \rangle -2r \big \langle N_S(z_n), (r-\tfrac{1}{n})N_S(z_n) \big \rangle \\
& \leq \| z_n-z_0\|^2 + r^2  -2r \big \langle N_S(z_n), (r-\tfrac{1}{n})N_S(z_n) \big \rangle = \| z_n-z_0\|^2 + r^2  -2r(r-\tfrac{1}{n}).
\end{align*}
The last term tends to $r^2+r^2-2r^2=0$ as $n \to \infty.$ This shows that $\lim_n \| x_n-x_0\| =0.$ Now, since $N_S$ is Lipschitz we can write, for every $n,m\in \N,$
$$
\|z_n-z_m\| = \| x_n + r N_S(z_n)-x_m + r N_S(z_m) \| \leq \| x_n-x_m\| + r \lip(N_S) \| z_n-z_m\|.
$$
This leads us to
$$
(1- r\lip(N_S)) \| z_n-z_m\| \leq \| x_n-x_m\|, \quad n,m \in \N,
$$
which shows that $(z_n)_n$ is a Cauchy sequence because so is $(x_n)_n$ and $r < \lip(N_S)^{-1}.$ Thus there exists some $z_0 \in S$ with $\dist(x,S)= \lim_n \| z_n-x\| = \| z_0-x\|.$ This proves that the distance function to $S$ is attained on the set $\lbrace z\in X \: :\: \dist(z,S) < \lip(N_S)^{-1} \rbrace.$ In addition, bearing in mind that $S$ is a one-codimensional manifold of class $C^1$ and $N_S$ is the outer unit normal to $S,$ it is straightforward to see that, for every $x\in X$ and $y\in S:$
\begin{equation}\label{characterizationminimizingpoints}
\| x-y\| = \dist(x,S) \quad \text{if and only if} \quad x-y= b_V(x) N_S(y).
\end{equation}
Let $x$ be a point with $\dist(x,S) < \lip(N_S)^{-1},$ or equivalently $|b_V(x)| <\lip(N_S)^{-1}$, and let us see that $\dist(x,S)$ is attained at a unique point. We already know that the distance $\dist(x,S)$ is attained at some $y\in S.$ Assume that there are different points $y_1, y_2 \in S$ such that $\dist(x,S)= \| x-y_1\|= \|x-y_2\|.$ It then follows from \eqref{characterizationminimizingpoints} that
$$
x-y_1 = b_V(x) N_S(y_1), \quad x-y_2= b_V(x)N_S(y_2);
$$
which easily implies that
$$
\| y_1-y_2 \| \leq |b_V(x)| \| N_S(y_2)-N_S(y_1)  \| \leq |b_V(x)| \lip(N_S) \| y_1-y_2 \| < \| y_1-y_2\|,
$$
a contradiction. Therefore, the point $y$ is the unique $y$ for which we have $\| x-y\| =\dist(x,S).$ 

\subsection*{Proof of Theorem \ref{characterization of C11 convex bodies}}

\noindent $(1) \implies (2):$ Let $x\in S$ and $r \in (0, 1/L).$ We first claim that $z:=x-rN_S(x) \in \interior(V).$ Indeed, otherwise we would have $x-tN_S(x) \in S$ for some $t \in (0,r]$ and by convexity of $V$
$$
0 \leq t^{-1}\langle N_S(x-tN_S(x)), x-t N_S(x)-x \rangle = \tfrac{1}{2}\| N_S(x)-N_S(x-tN_S(x))\|^2 -1 \leq \tfrac{L^2}{2}t^2-1,
$$
which is absurd. Thus $z\in \interior(V).$ Now assume for the sake of contradiction that $B(z,r)$ is not contained in $V,$ where $z=x-rN_S(x).$ We have that $t:=\dist(z,S)<r$ and by Theorem \ref{regularity of the signed distance} $(1)$ there exists a unique $y\in S$ such that $\|z-y\| = t.$ Moreover, by the characterization \eqref{characterizationminimizingpoints}, $y$ satisfies $y=z+tN_S(y)$ and we can write
$$
rN_S(x)-rN_S(y) = x-z-tN_S(y) + (t-r)N_S(y) = x-y+(t-r)N_S(y).
$$
By convexity of $V$ we have $\langle N_S(y), y-x \rangle \geq 0$ and then
$$
r^2 \| N_S(x)-N_S(y)\|^2 = \| x-y\|^2 +(r-t)^2 + 2(r-t) \langle N_S(y), y-x \rangle \geq \| x-y\|^2+(r-t)^2>\|x-y\|^2.
$$
This is a contradiction because $N_S$ is $L$-Lipschitz. We have shown that $B(z,r) \subset V$. 

Finally, if $y\in B(z,r) \cap S,$ then $\| y-z\|  \leq r = \dist(z,S)$ as $B(z,r) \subset V.$ This proves that $y=x$ because the distance $\dist(z,S)$ is attained at a unique point.

\medskip

\noindent $(2) \implies (3):$ Given $0<r<1/L$ and $x\in S,$ there exists a ball $B(z_x,r)$ contained in $V$ and such that $B(z_x,r) \cap S= \lbrace x \rbrace.$ The tangent hyperplane to $S$ at the point $x$ coincides with the tangent hyperplane to $\partial B(z_x,r)$ at $x,$ and this implies that $N_S(x) = (x-z_x) / \| x-z_x\|.$ Hence $z_x= x-r N_S(x).$

Now we consider two points $x,y\in S$ and define $p:= x+r(N_S(y)-N_S(x)).$ It is immediate that $p\in B(x-rN_S(x),r) \subset V$ and then $\langle N_S(y),y-p \rangle \geq 0$ since $V$ is convex. Consequently we have
\begin{align*}
 \langle N_S(y), y-x \rangle \geq  \langle N_S(y), p-x \rangle = r \langle N_S(y), N_S(y)-N_S(x) \rangle = \tfrac{r}{2}\| N_S(x)-N_S(y) \|^2
\end{align*}
Since $r\in (0, 1/L)$ is arbitrary we obtain the desired inequality.

\medskip

\noindent $(3) \implies (4):$ If $V$ is a half-space (that is, $S$ is a hyperplane) then the result is obvious. Therefore we may assume that $V$ is not a half-space. Let us consider the $1$-jet on $S$ given by $(f,G)=(1, N_S).$ It is immediate from $(3)$ that
$$
f(x)-f(y)-\langle G(y),x-y \rangle \geq \tfrac{1}{2L} \| G(x)-G(y)\|^2 \quad \text{for all} \quad x,y\in S.
$$
Thus we can apply \cite[Theorem 2.4]{AzagraMudarraExplicitFormulas} to obtain a convex function $F \in C^{1,1}(X)$ such that $(F, \nabla F)=(f,G)= (1,N_S)$ on $S$ and $\lip(\nabla F) \leq L.$ Let us see that, in fact, $F^{-1}(1)=S.$ Indeed, there exist points $x$ such that $F(x)<1$ as otherwise $\nabla F=0$ on $S.$ Thus $W:=F^{-1}(-\infty,1] $ is a closed convex body such that $S\subseteq \partial W=F^{-1}(1)$. Also, given any $x\in X \setminus V,$ the convexity of $F$ together with the fact that $\nabla F=N_S$ give us
$$
F(x) \geq F(P_V(x)) + \langle N_S(P_V(x)), x-P_V(x) \rangle = 1 + \dist(x,V)>1,
$$
where $P_V(x)$ denotes the projection of $x$ onto $V.$ This shows that $W \subseteq V.$ 

To show that $V\subseteq W$, we need to use the following.
\begin{fact}\label{if V is not a half-space}
{\em
If a convex body $V$ is not a half-space, and if $x_0\in\textrm{int}(V)$, then there exists a direction $v\in X \setminus\{0\}$ such that the line $\mathcal{L}:=\{x_0+tv: t\in\R\}$ intersects $S=\partial V$ at exactly two points $x_1=x_0+t_1v$ and $x_2=x_0+t_2v$, with $t_1<0<t_2$.}
\end{fact}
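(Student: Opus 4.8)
The plan is to use elementary convex geometry together with the fact that $V$ is not a half-space to produce the desired line through $x_0$. First I would observe that since $V$ is not a half-space, its boundary $S$ cannot be contained in any hyperplane; indeed, if $S\subseteq \pi$ for a hyperplane $\pi$, then the closed convex set $V$, having $S$ as boundary, would be one of the two closed half-spaces determined by $\pi$. So there exist at least two points of $S$ which, together with $x_0$, are affinely independent in a suitable sense — more precisely, $S$ is not contained in any affine hyperplane through $x_0$.

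Next I would pick any point $x_1\in S$ and set $v:=x_1-x_0\neq 0$ (note $v\neq 0$ since $x_0\in\interior(V)$ and $x_1\in\partial V$). Consider the line $\mathcal{L}=\{x_0+tv:t\in\R\}$. Because $x_0\in\interior(V)$, for small $|t|$ we have $x_0+tv\in\interior(V)$, so the set $\{t\in\R: x_0+tv\in V\}$ is a closed interval $[a,b]$ with $a<0<b$ (finite or infinite endpoints a priori). If both $a$ and $b$ are finite, then $x_1':=x_0+av$ and $x_2':=x_0+bv$ are exactly the two points where $\mathcal{L}$ meets $S$ (any interior point of the segment is in $\interior(V)$, and outside $[a,b]$ the points are outside $V$), and we are done with $t_1=a<0<b=t_2$. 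The only obstruction is the possibility that $\mathcal{L}$ meets $S$ in just one point, which happens precisely when exactly one of $a,b$ is $-\infty$ or $+\infty$, i.e. $\mathcal{L}$ enters $V$ and never leaves on one side; equivalently $v$ or $-v$ is a recession direction of $V$.

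The main obstacle is therefore to rule out (or work around) the recession-direction case: it could happen that the first candidate $x_1$ gives a bad direction $v$. To handle this I would argue as follows. By the first step, choose two points $p,q\in S$ such that $x_0,p,q$ do not lie on a common line (possible since $S$ is not contained in a line through $x_0$ when $\dim X\geq 2$; the case $\dim X=1$ is vacuous as there $V$ is always a half-space or an interval — in fact if $\dim X=1$ every convex body with nonempty interior whose complement has nonempty interior on both sides is a bounded interval, handled directly). Set $v:=p-x_0$ and $w:=q-x_0$. If the line through $x_0$ in direction $v$ already meets $S$ twice we are done; otherwise $-v$ is a recession direction of $V$, so $x_0-sv\in\interior(V)$ for all $s>0$. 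Now consider the line through $x_0$ in direction $w$. If it also fails, then $-w$ is likewise a recession direction. But then, using convexity, the two-dimensional affine plane spanned by $x_0,v,w$ intersects $V$ in a planar convex set containing the point $p=x_0+v\in S$ on its boundary while also containing the whole ``wedge'' $\{x_0 - sv - tw : s,t\geq 0\}$ in its interior relative to that plane; a short planar argument shows that then one can choose a direction inside this plane — for instance $v+\lambda(v-w)$ or a rotation of $v$ toward $-w$ — along which the line from $x_0$ exits $V$ on both sides, since the planar convex set cannot contain two opposite open rays from an interior point and still have $p$ on its boundary unless it is a half-plane, contradicting that $x_0,p,q$ are not collinear together with $q\in S$. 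I expect the cleanest way to finish is this planar reduction: restrict everything to the $2$-plane through $x_0,p,q$, where ``$V$ is not a half-space'' combined with $p,q\in\partial V$ forces the existence of a secant line through $x_0$ hitting the boundary on both sides, and lift that direction back to $X$.
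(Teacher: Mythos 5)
Your reduction to finding a direction $u$ such that neither ray from $x_0$ stays inside $V$ is fine, and the case analysis via the interval $\{t:x_0+tu\in V\}$ is correct; the gap is in the final planar step. A two-dimensional section of a convex body that is \emph{not} a half-space can perfectly well be a half-plane of that section, and then no direction inside your plane works, while none of your hypotheses is violated. Concretely, take $X=\R^3$, $V=\{(x,y,z): y\le 0,\ z\le 0\}$ (a wedge, not a half-space), $x_0=(0,-1,-1)\in\interior(V)$, $p=(1,0,-1)\in S$, $q=(0,0,-1)\in S$. Then $x_0,p,q$ are not collinear, yet $-(p-x_0)=(-1,-1,0)$ and $-(q-x_0)=(0,-1,0)$ are both recession directions of $V$, so both of your candidate lines fail; the plane $P$ through $x_0,p,q$ is $\{z=-1\}$, and $V\cap P=\{(x,y,-1): y\le 0\}$ is a half-plane whose boundary line contains $p$ and $q$ without making $x_0,p,q$ collinear. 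So your claimed contradiction (``the planar set cannot be a half-plane because $x_0,p,q$ are not collinear and $q\in S$'') is simply false, and in this example every line through $x_0$ lying in $P$ meets $S$ at most once; a good direction, e.g. $(0,1,-1)$, must leave the plane $P$.

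The dichotomy has to be run in $X$, not in a section chosen from an arbitrary non-collinear pair $p,q$. If no direction works, then for every $u$ either $u$ or $-u$ lies in the recession cone $\mathrm{rec}(V)$, i.e. $\mathrm{rec}(V)\cup(-\mathrm{rec}(V))=X$; since $\mathrm{rec}(V)$ is a closed convex cone contained (by Hahn--Banach, as $V\ne X$) in some half-space $\{\langle n,\cdot\rangle\le 0\}$, this forces $\mathrm{rec}(V)=\{\langle n,\cdot\rangle\le 0\}$, and then $V\supseteq x_0+\mathrm{rec}(V)$ together with convexity and closedness forces $V$ to be a half-space, contradicting the hypothesis. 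That global recession-cone argument repairs your proof with essentially no extra machinery. The paper argues differently: it picks two distinct supporting hyperplanes $\mathcal H_1,\mathcal H_2$ of $V$ (these exist because $V$ is not a half-space) and a line through $x_0$ hitting them at points on opposite sides of $x_0$; since $V$ lies in both supporting half-spaces and $x_0$ is interior, each of the two rays from $x_0$ must cross $\partial V$ exactly once before reaching the corresponding hyperplane. Either route closes the hole; your section through an arbitrarily chosen $p,q$ does not.
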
 
Assuming this is true for a moment, let us see why $V\subseteq W$.
Assume there exists $x_0\in V$ such that $x_0\notin W$, that is, $F(x_0)>1$. Since $F=1$ on $\partial V$ we necessarily have $x_0\in\textrm{int}(V)$. Let $v, x_1, x_2, t_1, t_2$ be as in Fact \ref{if V is not a half-space}. Then the convex function $\varphi:\R\to\R$ defined by $\varphi(t)=F(x_0+tv)$ takes the value $1$ at the points $t_1$ and $t_2$, while $\varphi(0)>1,$ which is absurd. Therefore we must have $V=W$, and consequently $S=F^{-1}(1)$.

\medskip

Now let us prove Fact \ref{if V is not a half-space}. By assumption $V$ admits at least two different support hyperplanes, say $\mathcal{H}_1$, $\mathcal{H}_2$, the boundaries of two open half-spaces $\mathcal{U}_1, \mathcal{U}_2$, both containing $V$. Then there exists $v\in X \setminus\{0\}$ such that the line $\mathcal{L}:=\{x_0+tv: t\in\R\}$ intersects $\mathcal{H}_1$ and $\mathcal{H}_2$ at two different points $y_1\in\mathcal{H}_1$, $y_2\in\mathcal{H}_2$. We may write $y_1=x_0+s_1v$, $y_2=x_0+s_2v$, and assume (up to replacing $v$ with $-v$ if necessary) that $s_1<0<s_2$. Since $x_0\in\textrm{int}(V)\subseteq\mathcal{U}_1\cap\mathcal{U}_2$, $V$ is a convex body, $\mathcal{H}_1$ and $\mathcal{H}_2$ support $V$, the ray $\{x_0+tv :t<0\}$ intersects $\mathcal{H}_1$, and the ray $\{x_0+tv: t>0\}$ intersects $\mathcal{H}_2$, we may conclude that there exist unique numbers $t_1\in [s_1, 0)$ and $t_2\in (0, s_2]$ such that $x_0+t_1v\in\partial V$ and $x_0+t_2v\in\partial V$.

\medskip

\noindent $(4) \implies (1):$ It is immediate since $\nabla F$ is $L$-Lipschitz.

\medskip

Let us now further assume that $V$ is bounded and $0 \in \interior(V).$ We have that $\mu_V^{-1}(0)=0$ and, since $V$ is a convex body of class $C^1,$ we know that $\mu_V$ is differentiable on $X \setminus \lbrace 0 \rbrace$ and
\begin{equation}\label{comparisongradientminkowskinormal}
\nabla \mu_V(x) = \frac{1}{ \Big \langle  N_S \left( \frac{x}{\mu_V(x)} \right), \frac{x}{\mu_V(x)} \Big \rangle }  N_S\left( \frac{x}{\mu_V(x)} \right) \quad \text{for all} \quad x\in X \setminus \lbrace 0 \rbrace.
\end{equation}
In particular, we have that
\begin{equation}\label{gradientminkowskifunctional}
\langle \nabla \mu_V(z), z \rangle =1 \quad \text{for all} \quad z\in S.
\end{equation}
Let $0<r \leq R$ be such that 
\begin{equation}\label{ballscontaininbody}
B(0,r) \subset V \subset B(0,R).
\end{equation}
Then $\mu_V$ is $r^{-1}$-Lipschitz and $\mu_V \geq R^{-1} \|\cdot \|$ on $X.$ Therefore $\|\nabla \mu_V\| \leq r^{-1}$ and also, because $N_S= \nabla \mu_V/ \| \nabla \mu_V\|$ on $S,$ the identity \eqref{gradientminkowskifunctional} gives
\begin{equation} \label{geometricestimationc11normal}
\langle N_S(z), z \rangle \geq r \quad \text{for all} \quad z\in S.
\end{equation}
Finally, combining \eqref{gradientminkowskifunctional} with \eqref{ballscontaininbody} we obtain
\begin{equation}\label{infimumminkowskigradient}
\| \nabla \mu_V (z)\| \geq R^{-1} \quad \text{for all} \quad z\in S.
\end{equation}

Now let us see why $(1)$ and $(5)$ are equivalent. 

\noindent $(1) \implies (5):$ Let us assume that $N_S: S \to S_X$ is Lipschitz. Using that $\mu_V$ is $r^{-1}$-Lipschitz and \eqref{ballscontaininbody} we can write, for every $x,y\in X\setminus \lbrace 0 \rbrace,$
\begin{align}\label{previousestimationinverseminkowski}
\bigg \| \frac{x}{\mu_V(x)} - \frac{y}{\mu_V(y)} \bigg \| & = \frac{\| \left( \mu_V(y)-\mu_V(x) \right) x + \mu_V(x)(x-y)\|}{\mu_V(x) \mu_V(y)} \leq \frac{|\mu_V(x)-\mu_V(y)| \| x \|}{\mu_V(x) \mu_V(y)} +\frac{\| x-y\| }{\mu_V(y)} \\
& \leq \frac{r^{-1}\| x-y\| \|x\|}{\mu_V(x)\mu_V(y)} + \frac{\| x-y\| }{\mu_V(y)} \leq \frac{1}{\mu_V(y)}\left( 1 +R r^{-1} \right) \| x-y\|. \nonumber 
\end{align}
Given $x,y\in X \setminus \lbrace 0 \rbrace,$ let us denote $\overline{x}= \frac{x}{\mu_V(x)}$ and $\overline{y}= \frac{y}{\mu_V(y)}.$ Using first \eqref{comparisongradientminkowskinormal}, then \eqref{geometricestimationc11normal} and finally \eqref{previousestimationinverseminkowski} we get
\begin{align*}
\|  \nabla \mu_V(x) & - \nabla \mu_V(y)\|  = \bigg \| \frac{N_S(\overline{x})}{\langle N_S(\overline{x}), \overline{x} \rangle} -  \frac{N_S(\overline{y})}{\langle N_S(\overline{y}), \overline{y} \rangle} \bigg \| \\
& = \frac{\| \left(\langle N_S(\overline{y}), \overline{y} \rangle-\langle N_S(\overline{x}), \overline{x} \rangle \right)N_S(\overline{x}) + \langle N_S(\overline{x}), \overline{x} \rangle\left( N_S(\overline{x})-N_S(\overline{y}) \right) \|  }{\langle N_S(\overline{x}), \overline{x} \rangle \langle N_S(\overline{y}), \overline{y} \rangle} \\
& \leq \frac{| \langle N_S(\overline{y})-N_S(\overline{x}), \overline{x} \rangle| + | \langle N_S(\overline{y}), \overline{y}-\overline{x} \rangle|}{\langle N_S(\overline{x}), \overline{x} \rangle \langle N_S(\overline{y}), \overline{y} \rangle} + \frac{\lip(N_S)}{\langle N_S(\overline{y}), \overline{y} \rangle} \| \overline{x}-\overline{y}\| \\
& \leq \frac{\left( 1 + \| \overline{x}\| \lip(N_S) \right) \| \overline{x}-\overline{y}\|}{\langle N_S(\overline{x}), \overline{x} \rangle \langle N_S(\overline{y}), \overline{y} \rangle} + \frac{\lip(N_S)}{\langle N_S(\overline{y}), \overline{y} \rangle} \| \overline{x}-\overline{y}\| \leq \left( \frac{1 + R \lip(N_S) }{r^2} + \frac{\lip(N_S)}{r} \right)\| \overline{x}-\overline{y}\| \\
& \leq   \frac{\left( \frac{1 + R \lip(N_S) }{r^2} + \frac{\lip(N_S)}{r} \right)}{\mu_V(y)}\left( 1 +R r^{-1} \right) \| x-y\|.
\end{align*}
This proves that, for every $\alpha >0,$ there exists a constant $M_\alpha>0$ such that
$$
\lip( \nabla \mu_V, U_\alpha) \leq M_\alpha, \quad \text{where} \quad U_\alpha= \lbrace z\in X \: : \: \mu_V(z) \geq \alpha \rbrace,
$$
which shows $(5).$ 

\medskip

\noindent $(5) \implies (1):$ By assumption we have that $\nabla \mu_V$ is Lipschitz on $S.$ Since $N_S= \nabla \mu_V / \| \nabla \mu_V\| $ on $S$ we can write, for every $x,y\in S,$ 
$$
 \| N_S(x)-N_S(y)\| \leq \frac{2\| \nabla \mu_V(x)- \nabla \mu_V (y)\|}{\| \nabla \mu_V(y)\|} \leq \frac{2 \lip( \nabla \mu_V, S)}{\|\nabla \mu_V(y)\|} \| x-y\| \leq 2 R \lip( \nabla \mu_V, S)\| x-y\|,
$$
where the last inequality follows from \eqref{infimumminkowskigradient}. We have thus shown that $N_S$ is Lipschitz on $S.$

\subsection{Proof of Theorem \ref{regularity of the signed distance} $(2), \:(3)$ and $(4)$}  We start with the proof of $(2).$ Let $\varepsilon \in (0,1)$ and let $x,y \in \interior(V)$ be such that $d_S(x), d_S(y) \leq (1-\varepsilon)\lip(N_S)^{-1}.$ By Theorem \ref{characterization of C11 convex bodies} $(2),$ the point $P_S(y)$ does not belong to the open ball centered at $P_S(x)-\lip(N_S)^{-1}N_S(P_S(x))$ and with radius $\lip(N_S)^{-1}.$ This is equivalent to
$$
\| P_S(x)-P_S(y) \|^2 \geq 2 \lip(N_S)^{-1} \langle P_S(x)-P_S(y), N_S(P_S(x)) \rangle.
$$
We learnt from $(1)$ that $P_S(x)-x = d_S(x) N_S(P_S(x))$. Using that $d_S(x) \leq (1-\varepsilon)\lip(N_S)^{-1},$ the above inequality yields
\begin{equation}\label{inequalityXpsxpsy}
(1-\varepsilon) \| P_S(x)-P_S(y) \|^2 \geq 2\langle P_S(x)-P_S(y), P_S(x)-x \rangle.
\end{equation}
Similary we deduce
\begin{equation}\label{inequalityYpsxpsy}
(1-\varepsilon) \| P_S(x)-P_S(y) \|^2 \geq 2\langle P_S(y)-P_S(x), P_S(y)-y \rangle.
\end{equation}
After summing \eqref{inequalityXpsxpsy} and \eqref{inequalityYpsxpsy} and making some elementary calculations we get
\begin{equation}\label{Pfirmlynonexpansiveinterior}
\langle P_S(x)-P_S(y), x- y \rangle \geq \varepsilon \|  P_S(x)-P_S(y) \|^2.
\end{equation}
Now, observe that, if $z\in S$ and $w\in \interior(V)$ with $d_S(w) <\lip(N_S)^{-1}$, then
$$
\|z-P_{S }(w)\|\leq  \|z-w\|+\|w-P_S(w)\|\leq \|z-w\| +\| w-z\|=2\|z-w\|.
$$
This fact together with \eqref{Pfirmlynonexpansiveinterior} tell us that $P_S$ is continuous on the set $b_V^{-1} \left( [-(1-\varepsilon)\lip(N_S)^{-1},0] \right)$ and, consequently, \eqref{Pfirmlynonexpansiveinterior} holds for every $x,y$ belonging to this set. 

Finally, recall that the metric projection onto a convex set in a Hilbert space is firmly non-expansive (see \cite[Proposition 4.8]{BauschkeCombettesbook} for instance), which implies that
$$
\langle P_S(x)-P_S(y), x- y \rangle \geq  \|  P_S(x)-P_S(y) \|^2 \quad \text{for all} \quad x,y\in X \setminus \interior(V).
$$
All these observations allow us to conclude $\langle P_S(x)-P_S(y), x- y \rangle \geq \varepsilon \|  P_S(x)-P_S(y) \|^2$ for every $x,y \in U_\varepsilon.$

\medskip

The following Claim will be helpful in the proof of $(3)$.

\begin{claim}\label{claimprojectionanddistanceofinteriorpoints}
{\em Let $r< \lip(N_S)^{-1}$ and $z \in S.$ Then for $0 \leq t\leq r,$ we have that $P_S(z-tN_S(z)))=z$ and $b_V(z-tN_S(z))=-t.$ }
\end{claim}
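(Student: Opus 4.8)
The plan is to prove both assertions of Claim \ref{claimprojectionanddistanceofinteriorpoints} by combining the rolling-ball property (statement $(2)$ of Theorem \ref{characterization of C11 convex bodies}) with the uniqueness of metric projections already obtained in the proof of Theorem \ref{regularity of the signed distance} $(1)$. First I would fix $z\in S$, set $L:=\lip(N_S)$, take $r<L^{-1}$ and $0\le t\le r$, and write $w:=z-tN_S(z)$. The geometric idea is that $w$ lies on the segment joining the "deepest" center $x_z:=z-rN_S(z)$ (whose ball $B(x_z,r)\subset V$ touches $S$ only at $z$) to the boundary point $z$; intuitively $z$ should then be the nearest point of $S$ to $w$, at distance exactly $t$.

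The key steps, in order. \textbf{Step 1: $w\in\interior(V)$ and $\dist(w,S)\le t$.} By the implication $(1)\Rightarrow(2)$ (its first paragraph, applied with radius $r$), we have $x_z=z-rN_S(z)\in\interior(V)$ and $B(x_z,r)\subset V$. Since $w=z-tN_S(z)$ lies on the segment $[x_z,z]$ (because $0\le t\le r$), and $x_z\in\interior(V)$, $z\in V$, convexity of $V$ gives $w\in V$; in fact $w=\tfrac{t}{r}x_z+(1-\tfrac{t}{r})z$ (for $r>0$) lies in $\interior(V)$ when $t<r$, and for $t=r$ we have $w=x_z\in\interior(V)$ directly. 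Also $\|w-z\|=t$, so $\dist(w,S)\le t\le r<L^{-1}$. \textbf{Step 2: $\dist(w,S)\ge t$.} This is the real point. Because $B(x_z,r)\subset V$, every point $y\in S$ satisfies $y\notin\interior(B(x_z,r))$, i.e. $\|y-x_z\|\ge r$. Now estimate, for arbitrary $y\in S$,
$$
\|y-w\|\ge \|y-x_z\|-\|x_z-w\|\ge r-(r-t)=t,
$$
using $\|x_z-w\|=\|(r-t)N_S(z)\|=r-t$. Hence $\dist(w,S)\ge t$, and combined with Step 1, $\dist(w,S)=\|w-z\|=t$, so $z$ is a nearest point of $S$ to $w$. \textbf{Step 3: conclude.} Since $\dist(w,S)=t<L^{-1}$, Theorem \ref{regularity of the signed distance} $(1)$ applies: the nearest point is unique and equals $P_S(w)$. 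Therefore $P_S(w)=z$. Finally, $w=z-tN_S(z)\in\interior(V)$ and $t\ge 0$, so by definition of the signed distance $b_V(w)=-\dist(w,\partial V)=-t$ when $t>0$, and $b_V(w)=b_V(z)=0=-t$ when $t=0$. This gives $b_V(z-tN_S(z))=-t$, completing the proof.

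The step I expect to be the only mild subtlety is \textbf{Step 2}: one must be careful that the open-ball characterization in Theorem \ref{characterization of C11 convex bodies} $(2)$ is used correctly, namely that $B(x_z,r)\subset V$ forces $\|y-x_z\|\ge r$ for all $y\in\partial V=S$ (a point of $S$ cannot lie in the open ball since the open ball is contained in $\interior(V)$, being open and inside $V$ — or more simply, $B(x_z,r)\subset V$ directly means $\|y-x_z\|\ge r$ for $y\notin V\supseteq$ nothing new, so one argues: if $\|y-x_z\|<r$ then $y\in B(x_z,r)\subset V$, and being in $S=\partial V$ it would have to be on $\partial B(x_z,r)\cap S$... ). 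Cleanly: since $B(x_z,r)\subseteq V$ and $B(x_z,r)$ is open, $B(x_z,r)\subseteq\interior(V)$, so $B(x_z,r)\cap S=\emptyset$, i.e. $\|y-x_z\|\ge r$ for every $y\in S$; this is all that Step 2 needs. Everything else is a direct application of results established earlier in this section, so no partition-of-unity or convex-envelope machinery is required here.
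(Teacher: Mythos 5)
Your proof is correct and follows essentially the same route as the paper's: the rolling ball $B(z-rN_S(z),r)\subset V$ touching $S$ only at $z$ (Theorem \ref{characterization of C11 convex bodies} $(2)$), the triangle-inequality estimate $\|y-w\|\ge \|y-x_z\|-(r-t)\ge t$ for $y\in S$, and uniqueness of the nearest point from Theorem \ref{regularity of the signed distance} $(1)$. The only blemish is the parenthetical claim that $w\in\interior(V)$ whenever $t<r$ (false at $t=0$, where $w=z\in S$), but this is harmless since you handle $t=0$ separately when computing $b_V$.
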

\begin{proof}
If $0 \leq t \leq r,$ the distance from $z-tN_S(z)$ to $S$ is attained at a unique point by $(1).$ On the other hand, $B(z-rN(z), r) \cap S= \lbrace z \rbrace$ by Theorem \ref{characterization of C11 convex bodies} $(2)$ and, if $y\in S,$ we have
$$
\| y-(z-tN_S(z)) \| \geq \| y-(z-rN_S(z))\| - |r-t| \geq r-(r-t)=t,
$$
with identity if and only if $y=z.$ This shows that $b_V( z-tN_S(z))=-\dist(z-tN_S(z), S)=-t$ and $z=P_S(z-tN_S(z)).$ 
\end{proof}

Let us now proceed with the proof of $(3).$ 

\noindent $(3):$ If $x\in X \setminus V,$ the convexity of $V$ implies that $b_V$ is differentiable at $x$ with $\nabla b_V(x)= \tfrac{x-P_S(x)}{b_V(x)},$ and using $(1)$ we obtain the formula $\nabla b_V = N_S \circ P_S$ on $X \setminus V.$ Now assume that $x\in  V$ is such that $b_V(x)>-\lip(N_S)^{-1}$ and let us prove the differentiability of $b_V$ at $x$ with $\nabla b_V(x)=N_S(P_S(x)).$ Observe that $b_V$ is $1$-Lipschitz on $X$ and the norm $\| \cdot \|$ on $X$ is (Fr\'{e}chet) differentiable at $N_S(x)$ with $\| N_S(x)\|=1$ and $\nabla (\| \cdot \|) ( N_S(x))=N_S(x).$ We can use a theorem of Fitzpatrick's \cite[Theorem 2.4]{Fitzpatrick} which tells us that the $1$-Lipschitz function $b_V$ will be differentiable at $x$ with $\nabla b_V(x)= N_S(x)$ as soon as we check that
\begin{equation}\label{directionaldifferentiabilitysigneddistance}
\lim_{t \to 0} \frac{b_V(x+tN_S(P_S(x)))-b_V(x)}{t}=1.
\end{equation}
Assume first that $x\in \partial V= S.$ If $t>0,$ then $x+tN_S(x) \in X \setminus V$ and $P_S( x+ tN_S(x))=x,$ which shows that $b_V(x+tN_S(x))=t.$ Hence $b_V(x+tN_S(x))-b_V(x)=t$ and \eqref{directionaldifferentiabilitysigneddistance} holds when $t\to 0^+.$ On the other hand, if $r>0$ is such that $r<\lip(N_S)^{-1}$ and $t \in [-r,0),$ we can apply the last part of Claim \ref{claimprojectionanddistanceofinteriorpoints} to obtain that $b_V(x+tN_S(x))=t.$ Thus \eqref{directionaldifferentiabilitysigneddistance} trivially holds when $t \to 0^{-}.$

Let us now check \eqref{directionaldifferentiabilitysigneddistance} for points $x\in \interior(V)$ with $\dist(x,S)< \lip(N_S)^{-1}.$ Take $0<\varepsilon < \dist(x,S)$ such that $ \dist(x,S)+\varepsilon < \lip(N_S)^{-1},$ define $r:= \dist(x,S) + \varepsilon$, and let $0<|t| \leq \varepsilon.$ We have $x-P_S(x)=-\dist(x,S) N_S(P_S(x))$ by virtue of $(1)$ and 
$$
x +t N_S(P_S(x)) = P_S(x)-( \dist(x,S)-t) N_S(P_S(x)),
$$
where $\dist(x,S)-t \in [0,r)$ thanks the choice of $r$ and $\varepsilon.$ Applying the last part of Claim \ref{claimprojectionanddistanceofinteriorpoints}, we obtain 
$$
b_V( x +t N_S(P_S(x)) = b_V \left( P_S(x)-( \dist(x,S)-t) N_S(P_S(x)) \right) = t-\dist(x,S).
$$
This immediately yields \eqref{directionaldifferentiabilitysigneddistance}. In conclusion, we have shown that $b_V$ is Fr\'echet differentiable at every $x\in X$ such that $b_V(x)>- \lip(N_S)^{-1},$ with $\nabla b_V(x) = N_S  P_{S}(x).$ Moreover, the mapping $U_\varepsilon \ni x \mapsto P_S(x)$ is $ \varepsilon^{-1}$-Lipschitz by $(2)$, and therefore
$$
\|\nabla b_V(x)- \nabla b_V (y) \| \leq \lip(N_S) \lip(P_{S}) \| x-y\| \leq \tfrac{1}{\varepsilon} \lip(N_S)\|x-y\|
$$ 
for every $x,y\in U_\varepsilon.$

\medskip

\noindent $(4):$ Outside $V$ we have that $b_V=\dist( \cdot, V)$, and $\dist( \cdot, V)$ is convex on $X.$ Hence $b_V$ is convex on any line segment contained in $X\setminus \interior{V}$. Let us now see that $b_V$ is convex on $\interior(V)$. If $[x,y]$ is a line segment contained in $\interior(V)$ and
$$
z_{\lambda}:=(1-\lambda)x+\lambda y, \quad \lambda \in [0,1],
$$
is a point of $[x,y],$ for every $\varepsilon>0$ we can find a point $p_\lambda \in S$ such that
$$
\|z_\lambda-p_\lambda\| \leq \dist(z_\lambda, S) + \varepsilon = -b_V(z_\lambda) +\varepsilon .
$$
Let $W_\lambda$ denote the tangent hyperplane to $S$ at $p_\lambda$; since $V$ is convex we have that $W_\lambda \cap\interior{V}=\emptyset$. Then, if $p_x$ and $p_y$ denote the orthogonal projections of $x$ and $y$ onto $W_\lambda$, we have $p_x, p_y \in X\setminus V$, and therefore
$$
\dist(x, S)\leq \|x-p_x\| \quad \textrm{and} \quad \dist(y, S)\leq \|y-p_y\|.
$$
On the other hand, the function
$$
[0,1]\ni t\mapsto  \dist \left( (1-t)x+ty, W_\lambda \right)
$$
is obviously affine, so we have
\begin{align*}
 -& b_V(z_\lambda) + \varepsilon  \geq \|z_\lambda-p_\lambda\| \geq \dist(z_\lambda, W_\lambda) =(1-\lambda)\dist(x, W_\lambda)+\lambda \dist(y, W_\lambda) \\
 & = (1-\lambda)\|x-p_x\|+\lambda \|y-p_y\|  \geq (1-\lambda) \dist(x, S)+\lambda \dist(y, S)  = -(1-\lambda)b_{V}(x)-\lambda b_{V}(y),
\end{align*}
that is to say,
$$
b_{V}\left( (1-\lambda)x+\lambda y\right) = b_V(z_\lambda) \leq (1-\lambda)b_{V}(x)+\lambda b_{V}(y) + \varepsilon.
$$
Letting $\varepsilon \to 0^+,$ the above argument shows that $b_{V}$ is convex on $\interior{V}$, and by continuity it follows that $b_{V}$ is convex on $V$. Finally, if $x\in X\setminus V$ and $y\in \interior{V}$, hence the line segment $[x,y]$ is transversal to $S$, we may write $[x,y]=[x,z]\cup [z,y]$, where $z\in S,$ $[x,z]\subset X\setminus\interior{V} $ and $[z, y]\subset V$. Consider the function $\varphi:[0,1]\to\R$ defined by
$\varphi(t)=b_{V}\left( (1-t)x + ty\right)$, and let $t_0\in (0,1)$ be the number such that $z=(1-t_0)x+t_0 y$. We know that $\varphi$ is convex on $[0, t_0]$, and $\varphi$ is convex on $[t_0, 1]$ as well. Besides $\varphi$ is differentiable at $t_0$ because $b_V$ is differentiable on a neighbourhood of $S$ by $(3)$. Hence $\varphi$ is convex on $[0,1]$, for every $x,y$. It follows that $b_V$ is convex on $[x,y]$. Therefore $b_V$ is convex on $X$.


\section{Main results}\label{sectionmainresults}

In this section we will establish some generalizations of Theorem \ref{main theorem for C11 Hilbert} which are valid for convex bodies of class $C^{1, \omega}$ in Hilbert spaces or for convex bodies of class $C^{1, \alpha}$ in Banach spaces with equivalent norms of power type $1+\alpha$, with $\alpha\in (0,1]$. Of course, the usual norm of any Hilbert space satisfies this property with $\alpha=1$. In fact, it is well known that superreflexive Banach spaces are characterized as being Banach spaces with equivalent norms of class $C^{1, \alpha}$ for some $\alpha\in (0, 1]$, and Hilbert spaces are characterized as being Banach spaces with equivalent norms of class $C^{1,1}$. For reference about renorming properties of superreflexive spaces see \cite{Pisier, DGZ, FabianEtAl}.

But we must first specify what we mean by a convex body of class $C^{1, \alpha}$, $0<\alpha\leq 1$, in a Banach space, or more generally, by a convex body of class $C^{1, \omega}$, where $\omega$ is a modulus of continuity. 

The first difficulty we encounter is that Definition \ref{definition of C11 hypersurface} no longer makes sense in a non-Hilbertian Banach space, as we do not have a notion of orthogonality in this setting. For the same reason, the statement of Theorem \ref{characterization of C11 convex bodies} does not make sense in a Banach space.

On the other hand, even if we should like to restrict our investigation to Hilbert spaces $X$, it is unclear what convex bodies in $X$ should be called of class  $C^{1, \omega}$ (where $\omega$ is a modulus of continuity). As a matter of fact, there are no analogues of Theorems \ref{characterization of C11 convex bodies} and \ref{regularity of the signed distance} for the class $C^{1, \alpha}$ when $\alpha<1$. This can be shown by considering a bounded convex body $W$ in $\R^2$ such that $0\in \textrm{int}(W)$ as an interior point and such that the graph of $y=|x|^{3/2}-1$, $-2\leq x\leq 2$, is contained in $\partial W$, and $\partial W$ is $C^{\infty}$ smooth away from the point $(0,-1)$. The Minkowski functional $\mu_W$ of such a body will be of class $C^{1, 1/2}$ on the set $\{(x,y) : 1/2 <\mu_{W}(x,y)<2\}$ (see the proof of $(1) \implies (5)$ in Theorem \ref{characterization of C11 convex bodies}), and the outer normal $N_{\partial W}$ will be $1/2$-H\"older continuous, so we are tempted to call $W$ a $C^{1, 1/2}$ convex body; however, property $(2)$ of Theorem \ref{characterization of C11 convex bodies}, as well as properties $(1)$ and $(3)$ of Theorem \ref{regularity of the signed distance}, will fail for this body $W$. Since $W$ is bounded and $\mu_W$ is $C^{1, 1/2}$ it is easy to see that $W$ still satisfies $(3)$ of Theorem \ref{characterization of C11 convex bodies} for a $C^{1,1/2}$ convex function $\varphi$.

In view of these facts, at least from an analytical point of view, and with the purpose of solving Problem \ref{main problem} for the classes $\mathcal{C}=C^{1, \omega}$ in a Hilbert space, or $\mathcal{C}=C^{1, \alpha}$ in a superreflexive space, we consider that, among all the available options, the following definition is the most satisfactory.

\begin{definition}\label{definition of C1omega convex body}
{\em Let $S$ be a subset of a Banach space $X$. We will say that $S$ is a convex hypersurface of class $C^{1, \alpha}$, where $\alpha\in (0, 1]$, provided that  there exist a number $M>0$ and a convex function $F \in C^{1, \alpha}(X)$ such that $S=F^{-1}(1)$ and 
$$
 M^{-1} \leq \|D F(x)\|_*\leq M \: \textrm{ whenever } \: x\in S.
$$
More generally, if $\omega$ is a modulus of continuity, we will say that a subset $S$ of $X$ is a convex hypersurface of class $C^{1, \omega}$ if there exist a number $M>0$ and a convex function $F\in C^{1, \omega}(X)$ such that $S=F^{-1}(1)$ and
$$
M^{-1}\leq \|D F(x)\|_* \leq M \:  \textrm{ whenever } \: x\in S.
$$
}
\end{definition}
By a modulus of continuity $\omega$ we will understand a concave and strictly increasing function $\omega : [0, + \infty) \to [0, + \infty)$ such that $\omega(0)=0$ and $\lim_{t \to +\infty} \omega(t)=+\infty$. Observe that such a function $\omega$ has a well defined inverse $\omega^{-1}:[0, \infty)\to [0, \infty)$ which is convex, strictly increasing, and satisfies $\omega^{-1}(0)=0$ and $\lim_{s \to +\infty} \omega^{-1}(s)=+\infty$.

\medskip

The following result generalizes Theorem  \ref{characterization of C11 convex bodies} to a large extent, but it does not provide sharp constants in $(4)$.

\begin{theorem}\label{characterizationC1omegaconvexbody}
Let $S$ be a convex hypersurface of a Hilbert space $X$, say $S=\partial V$, where $V$ is a closed convex body (not necessarily bounded). Assume that $S$ is a $C^1$ submanifold, so that the outer unit normal $N_S:S\to S_X$ is well defined. Let $\omega$ be a modulus of continuity, and denote $\varphi(t):=\int_0^t \omega (s) ds$. Then, the following statements are equivalent:
\begin{enumerate}
\item There exists $M>0$ such that $\| N_S(x)-N_S(y)\| \leq M \omega(\|x-y\|)$ for every $x,y\in S.$
\item There exists $M>0$ such that $W_x:= \lbrace p\in X \: \: \: \langle N_S(x),x-p\rangle \geq M \varphi(2\|x-p\|) \rbrace \subseteq V$ with $S \cap W_x = \lbrace x \rbrace$ for every $x\in S$. 
\item There exists $M>0$ such that $\langle N_S(y), y-x \rangle \geq \tfrac{\|N_S(x)-N_S(y)\|}{2} \omega^{-1}\left( \tfrac{\| N_S(x)-N_S(y)\|}{4M} \right)$ for every $x,y\in S.$ 
\item There exists a {\em convex} function $F:X\to\R$ of class $C^{1,\omega}$ such that $S=F^{-1}(1)$ and $\nabla F(x)=N_S(x)$ for every $x\in S$.
\item $S$ is a convex hypersurface of class $C^{1, \omega}.$
\end{enumerate}
Furthermore, if $V$ is bounded and $0\in \interior(V)$, then the above statements are also equivalent to:
\begin{enumerate}
\item[{(6)}] For every $\alpha >0$, $\mu_V$ is of class $C^{1,\omega}$ on the set $\{x\in X: \mu_V(x) \geq \alpha\}$ 
\end{enumerate}
\end{theorem}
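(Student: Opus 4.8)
The plan is to adapt, step by step, the proof of Theorem~\ref{characterization of C11 convex bodies}, running the cycle $(1)\Rightarrow(2)\Rightarrow(3)\Rightarrow(4)\Rightarrow(5)\Rightarrow(1)$ and then, under the boundedness hypothesis, $(1)\Leftrightarrow(6)$, but with two substitutions: the Euclidean balls of radius $\simeq 1/L$ that roll freely inside $V$ are replaced by the convex sublevel sets $W_x$, and the $C^{1,1}$ convex Whitney extension theorem \cite[Theorem~2.4]{AzagraMudarraExplicitFormulas} is replaced by its $C^{1,\omega}$ analogue. The preliminary remark is that, since $\varphi'=\omega$ is nonnegative and increasing with $\omega(0)=0$, the function $\varphi$ is convex, nondecreasing, $\varphi(0)=\varphi'(0)=0$; hence $p\mapsto M\varphi(2\|x-p\|)-\langle N_S(x),x-p\rangle$ is convex and $W_x$ is a closed convex set whose boundary is a $C^{1,\omega}$-type ``paraboloid'' with vertex $x$ and inner unit normal $N_S(x)$ at $x$. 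When $\omega(t)=t$ one has $\varphi(t)=t^2/2$ and $W_x$ is literally a ball of radius $\simeq 1/M$, so the geometry is exactly that of Theorem~\ref{characterization of C11 convex bodies}, only with worse constants.

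For $(1)\Rightarrow(2)$: fix $x\in S$. As in the proof of $(1)\Rightarrow(2)$ of Theorem~\ref{characterization of C11 convex bodies}, one first argues by contradiction that $W_x\subseteq V$: if a point $q\in\partial W_x$ lay in $X\setminus V$, let $y=P_S(q)\in S$ be its metric projection (available for $q$ close enough to $S$), and combine the convexity of $V$, which forces $\langle N_S(y),y-q\rangle\le 0$ with the wrong sign for $q\notin V$, with the modulus estimate $\|N_S(x)-N_S(y)\|\le M\omega(\|x-y\|)$; the decisive step is a ``$\varphi$-versus-$\omega$'' comparison obtained by integrating $\omega$ along the segment $[x,y]$, which contradicts $q\in\partial W_x$. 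The identity $S\cap W_x=\{x\}$ follows similarly: any other point of this intersection would violate either the convexity of $V$ or the modulus estimate. For $(2)\Rightarrow(3)$: given $x,y\in S$, pick the test point on $\partial W_x$ lying in the direction that ``separates'' $N_S(x)$ from $N_S(y)$ --- the analogue of $p=x+r(N_S(y)-N_S(x))$ in the $C^{1,1}$ argument --- so that $p\in W_x\subseteq V$, hence $\langle N_S(y),y-p\rangle\ge 0$; expanding $\langle N_S(y),y-x\rangle\ge\langle N_S(y),p-x\rangle$ and optimizing over the radial parameter of $p$ produces $(3)$, the inverse function $\omega^{-1}=(\varphi')^{-1}$ appearing precisely at the optimization step and the numerical constants being the price of non-sharpness.

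For $(3)\Rightarrow(4)$: condition $(3)$ says exactly that the $1$-jet $(f,G)=(1,N_S)$ on $S$ satisfies $f(x)-f(y)-\langle G(y),x-y\rangle\ge\tfrac{\|G(x)-G(y)\|}{2}\,\omega^{-1}\!\big(\tfrac{\|G(x)-G(y)\|}{4M}\big)$, which is the hypothesis of the Whitney-type extension theorem for convex functions of class $C^{1,\omega}$ (the $C^{1,\omega}$ analogue of \cite[Theorem~2.4]{AzagraMudarraExplicitFormulas}); this yields a convex $F\in C^{1,\omega}(X)$ with $(F,\nabla F)=(1,N_S)$ on $S$, and the identity $F^{-1}(1)=S$ is then proved \emph{verbatim} as in Theorem~\ref{characterization of C11 convex bodies}, using Fact~\ref{if V is not a half-space}, the bound $F(x)\ge 1+\dist(x,V)$ for $x\notin V$, and a one-dimensional convexity argument along a secant line. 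The implication $(4)\Rightarrow(5)$ is immediate, since $\|\nabla F\|=\|N_S\|\equiv 1$ on $S$, so Definition~\ref{definition of C1omega convex body} holds with $M=1$; conversely $(5)\Rightarrow(1)$ follows by writing $S=F^{-1}(1)$ with $F\in C^{1,\omega}$ convex and $M^{-1}\le\|\nabla F\|\le M$ on $S$, noting $N_S=\nabla F/\|\nabla F\|$ on $S$ (outward orientation from convexity of $F$), and using the elementary estimate $\|\,u/\|u\|-v/\|v\|\,\|\le 2\|u-v\|/\max\{\|u\|,\|v\|\}$ to get $\|N_S(x)-N_S(y)\|\le 2M\,C_F\,\omega(\|x-y\|)$, where $C_F$ is the $C^{1,\omega}$ constant of $F$. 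Finally, $(1)\Leftrightarrow(6)$ when $V$ is bounded and $0\in\interior(V)$ is obtained exactly as the equivalence $(1)\Leftrightarrow(5)$ in Theorem~\ref{characterization of C11 convex bodies}, transferring the modulus of continuity between $N_S$ and $\nabla\mu_V$ through the formulas \eqref{comparisongradientminkowskinormal}--\eqref{infimumminkowskigradient}, mutatis mutandis, using that $t\mapsto\omega(ct)$ and $t\mapsto c\,\omega(t)$ are again admissible moduli of continuity.

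The main obstacle is the pair of geometric steps $(1)\Rightarrow(2)$ and $(2)\Rightarrow(3)$. In the $C^{1,1}$ case the cosine rule turns ``rolling balls'' into clean algebraic identities; here one must instead manipulate the convex sets $W_x$ and the auxiliary convex function $\varphi$, and the heart of the matter is the chain of inequalities relating $\langle N_S(x)-N_S(y),\,\cdot\,\rangle$, $\varphi$, $\omega$ and $\omega^{-1}$. It is exactly the looseness in these comparisons that prevents us from obtaining sharp constants in $(4)$, as the statement of the theorem already warns.
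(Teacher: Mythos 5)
Your proposal is correct and follows essentially the same route as the paper: the same cycle $(1)\Rightarrow(2)\Rightarrow(3)\Rightarrow(4)\Rightarrow(5)\Rightarrow(1)$, the same $C^{1,\omega}$ convex extension theorem applied to the jet $(1,N_S)$, the verbatim identification $F^{-1}(1)=S$ via Fact \ref{if V is not a half-space}, and the same Minkowski-functional transfer for $(1)\Leftrightarrow(6)$. The only (harmless) deviation is inside $(1)\Rightarrow(2)$: you establish $W_x\subseteq V$ first, by projecting a hypothetical point of $W_x\setminus V$ onto the convex set $V$ (this projection is always available, since the point lies outside $V$, so no analogue of Theorem \ref{regularity of the signed distance} is needed) and using the concavity estimate $\varphi(2t)\geq t\,\omega(2t)$ together with $\|x-y\|\leq 2\|x-q\|$, whereas the paper first shows $S\cap W_x=\{x\}$ by the same kind of concavity inequality and then deduces $W_x\subseteq V$ by the soft observation that $W_x$ and $V$ are convex bodies with intersecting interiors whose boundaries meet at a single point; both arguments are valid.
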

\begin{proof}
\noindent $(1) \implies (2):$ Let $x,y$ be two different points in $S$, and assume that $y\in W_x.$ Then we have
$$
0 \leq \langle N_S(y), y-x \rangle = \langle N_S(y)-N_S(x),y-x \rangle + \langle N_S(x), y-x \rangle \leq M \omega(\|x-y\|)\|x-y\| -M\varphi(2\|x-y\|),
$$
where the last term is negative since $\varphi(2t) > t \omega(t)$ for every $t>0.$ This proves that $W_x \cap S= \lbrace x \rbrace.$ Now, observe that $x-\varepsilon N_S(x)$ belongs to $\interior(V) \cap \interior(W_x)$ for $\varepsilon>0$ small enough. Thus $W_x$ and $V$ are two convex bodies such that $W_x \cap \partial V$ is a single point and $\interior(V) \cap \interior(W_x) \neq \emptyset.$ Therefore $W_x \subset V.$ 

\medskip

\noindent $(2) \implies (3):$ Let $x,y\in S$ and define $r:=\| N_S(y)-N_S(x)\|$. We may assume that $r>0$, as $(3)$ trivially holds when $N_S(x)=N_S(y)$. Also set $p:=x+ \tfrac{1}{r} \omega^{-1}\left( \tfrac{r}{4M} \right)(N_S(y)-N_S(x)).$  Bearing in mind that $2t \omega(t) \geq \varphi(2t)$ for every $t\geq 0$ (which follows from the concavity of $\omega$), we can write
\begin{align*}
\langle N_S(x), x-p \rangle & = \tfrac{1}{r} \omega^{-1}\left( \tfrac{r}{4M} \right) \langle N_S(x), N_S(x)-N_S(y) \rangle = \tfrac{r}{2} \omega^{-1}\left( \tfrac{r}{4M} \right)   \\
& =2M \tfrac{r}{4M} \omega^{-1}\left( \tfrac{r}{4M} \right) \geq M \varphi \left( 2 \omega^{-1}\left( \tfrac{r}{4M} \right) \right)=M \varphi ( 2\|x-p\| ).
\end{align*}
This shows that $p\in W_x,$ which implies that $p\in V$ by virtue of $(2).$ We thus have $\langle N_S(y), y-p \rangle \geq 0$ by convexity of $V.$ Finally, we can write 
\begin{align*}
\langle N_S(y) &,   y-x \rangle = \langle N_S(y),y-p \rangle + \langle N_S(y), p-x \rangle \geq \langle N_S(y), p-x \rangle \\
&  = \tfrac{1}{r} \omega^{-1}\left( \tfrac{r}{4M} \right) \langle N_S(y), N_S(y)-N_S(x) \rangle = \tfrac{r}{2} \omega^{-1}\left( \tfrac{r}{4M} \right)=\tfrac{\|N_S(x)-N_S(y)\|}{2} \omega^{-1}\left( \tfrac{\| N_S(x)-N_S(y)\|}{4M} \right).
\end{align*}

\medskip

\noindent $(3) \implies (4):$ We define $(f,G):= (1,N_S)$ on $S.$ By $(3)$ the jet $(f,G)$ satisfies the inequality 
$$
f(x) \geq f(y) + \langle G(y), x-y \rangle + \tfrac{\|G(x)-G(y)\|}{2} \omega^{-1}\left( \tfrac{\| G(x)-G(y)\|}{4M} \right), \quad x,y \in S,
$$
and then \cite[Theorem 4.11]{AzagraMudarraExplicitFormulas} provides us with a convex function $F:X \to \R$ of class $C^{1,\omega}$ with $F=1$ and $\nabla F= N_S$ on $S.$ The same argument as in the proof of Theorem \ref{characterization of C11 convex bodies} gives that, in fact, $F^{-1}(1)=S.$ 

\medskip

\noindent $(4) \implies (5):$ This is obvious from Definition \ref{definition of C1omega convex body}.

\medskip

\noindent $(5) \implies (1).$ Let $F$ be a function as in Definition \ref{definition of C1omega convex body}. Of course we have $N_S= \nabla F / \| \nabla F \|$ on $S$, and then
$$
\| N_S(x)- N_S(y) \| \leq 2 \frac{\| \nabla F(x)-\nabla F(y)\|}{\|\nabla F(y)\|} \leq \frac{2}{\inf_{S} \| \nabla F \|} \lip(\nabla F) \omega \left( \| x-y\| \right) 
$$
for every $x,y\in S.$ This shows $(1).$ 

\medskip

The proofs of $(1) \implies (6)$ and $(6) \implies (1)$ in the case that $V$ is bounded are similar to those of Theorem \ref{characterization of C11 convex bodies}.
\end{proof}

\medskip

The next two results generalize Theorem \ref{main theorem for C11 Hilbert}.

\begin{theorem}\label{main theorem for C1omega Hilbert}
Let $C$ be a subset of a Hilbert space $X$, and let $N:C\to S_X$ be a mapping. Then the following statements are equivalent.
\begin{enumerate}
\item There exists a $C^{1, \omega}$ convex body $V$ such that $C\subseteq \partial V$ and $N(x)$ is outwardly normal to $\partial V$ at $x$ for every $x\in C$.
\item There exists some $\delta>0$ such that
$$
\langle N(y), y-x\rangle \geq  \|N(x)-N(y)\|  \omega^{-1}\left(  \delta \|N(x)-N(y)\| \right)  \quad \textrm{for all} \quad x, y\in C.
$$
\end{enumerate}
Moreover, if we further assume that $C$ is bounded, then $V$ can be taken to be bounded as well.
\end{theorem}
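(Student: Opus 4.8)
The plan is to prove the two implications separately, following---and, for the hard direction, completing---the scheme behind Theorem~\ref{main theorem for C11 Hilbert}. The implication $(1)\Rightarrow(2)$ will be immediate from the equivalence $(1)\Leftrightarrow(3)$ of Theorem~\ref{characterizationC1omegaconvexbody} together with elementary convexity properties of $\omega^{-1}$, while $(2)\Rightarrow(1)$ will be obtained by applying a Whitney-type extension theorem for convex $C^{1,\omega}$ functions to the $1$-jet $(1,N)$ on $C$ and reading off the desired body; the delicate point will be to verify that the resulting body is of class $C^{1,\omega}$ in the sense of Definition~\ref{definition of C1omega convex body}, especially for unbounded $C$.

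For $(1)\Rightarrow(2)$, suppose $V$ is a $C^{1,\omega}$ convex body with $C\subseteq S:=\partial V$ and $N=N_S|_C$, where $N_S$ denotes the outer unit normal; note that $S$ is a $C^1$ submanifold by the implicit function theorem, so Theorem~\ref{characterizationC1omegaconvexbody} applies. By Definition~\ref{definition of C1omega convex body}, $S$ satisfies statement~$(5)$, hence also statement~$(3)$, of that theorem: there is $M>0$ with
\[
\langle N_S(y),y-x\rangle\ \ge\ \tfrac12\|N_S(x)-N_S(y)\|\,\omega^{-1}\!\Big(\tfrac{\|N_S(x)-N_S(y)\|}{4M}\Big)\qquad(x,y\in S).
\]
Restricting to $x,y\in C$ and using that $\omega^{-1}$ is convex with $\omega^{-1}(0)=0$, so that $\omega^{-1}(\tfrac12 s)\le\tfrac12\omega^{-1}(s)$, I would obtain $(2)$ with $\delta=\tfrac{1}{8M}$.

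For $(2)\Rightarrow(1)$, assume $C\ne\emptyset$ (otherwise take $V$ to be a ball) and set $M:=\tfrac{1}{4\delta}$. Then $(2)$ reads
\[
1\ \ge\ 1+\langle N(y),x-y\rangle+\tfrac12\|N(x)-N(y)\|\,\omega^{-1}\!\Big(\tfrac{\|N(x)-N(y)\|}{4M}\Big)\qquad(x,y\in C),
\]
which is precisely the hypothesis under which \cite[Theorem~4.11]{AzagraMudarraExplicitFormulas} provides a convex $F\in C^{1,\omega}(X)$ with $F=1$ and $\nabla F=N$ on $C$. Exactly as in the proof of $(3)\Rightarrow(4)$ of Theorem~\ref{characterization of C11 convex bodies}, since $\nabla F=N\ne 0$ on $C$ there are points with $F<1$, so $V:=\{F\le 1\}$ is a closed convex body with $\partial V=F^{-1}(1)$, $\nabla F$ nonvanishing on $\partial V$, $C\subseteq\partial V$, and outer unit normal $\nabla F(x)/\|\nabla F(x)\|=N(x)$ at each $x\in C$. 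It remains to check that $\partial V$ is a $C^{1,\omega}$ convex hypersurface; by the equivalence $(1)\Leftrightarrow(5)$ of Theorem~\ref{characterizationC1omegaconvexbody}, and since $N_{\partial V}=\nabla F/\|\nabla F\|$ with $F\in C^{1,\omega}(X)$, this reduces to the two-sided estimate $0<\inf_{\partial V}\|\nabla F\|\le\sup_{\partial V}\|\nabla F\|<\infty$. When $C$ is bounded this is immediate because $\partial V$ is then compact, and in that case I would further force $V$ itself to be bounded by replacing $F$ with $F+\lambda\,\dist(\cdot,\overline{B}(0,R))^{2}$, where $C\subseteq\overline{B}(0,R)$ and $\lambda>0$: the added term is convex, of class $C^{1,1}\subseteq C^{1,\omega}$, and vanishes together with its gradient on $\overline{B}(0,R)\supseteq C$, while making the function coercive.

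The main obstacle is the two-sided bound on $\|\nabla F\|$ along $F^{-1}(1)$ when this set is unbounded---the gap left open in \cite{AMHilbert}. Here the plan is to exploit the explicit infimal-convolution-type formula for $F$ from \cite{AzagraMudarraExplicitFormulas}: at a point $z\in F^{-1}(1)$ the relevant base points $y\in C$ entering that formula satisfy an identity of the form $M\varphi(\|z-y\|)=\langle N(y),y-z\rangle\le\|z-y\|$, where $\varphi(t)=\int_0^t\omega(s)\,ds$; since $\varphi$ is superlinear, this confines $\|z-y\|$ to a bounded range depending only on $M$ and $\omega$, which in turn bounds the correction terms contributing to $\nabla F(z)$ and yields both $\|\nabla F(z)\|\le M'$ and $\|\nabla F(z)\|\ge(M')^{-1}$ on $F^{-1}(1)$. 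Once this estimate is in place, Theorem~\ref{characterizationC1omegaconvexbody} gives that $\partial V$ is a $C^{1,\omega}$ convex hypersurface, completing $(2)\Rightarrow(1)$ and the proof.
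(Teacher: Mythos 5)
Your implication $(1)\Rightarrow(2)$ and the first step of $(2)\Rightarrow(1)$ (feeding the jet $(1,N)$ into \cite[Theorem 4.11]{AzagraMudarraExplicitFormulas}) agree with the paper, but at the crucial point you diverge from it and the argument has a genuine gap. The paper does \emph{not} take the sublevel set of the raw extension $H=\mathrm{conv}(g)$: it first adds the term $\delta^{-1}\varphi^*(\delta)\,\varphi(d_A)$, with $A=\overline{\co}\,(C\cup\{y-\omega^{-1}(\delta)N(y):y\in C\})$, precisely because this forces every point of the level set $\{F=1\}$ to lie within distance $\varphi^{-1}(1)$ of $A$, and because $F$ equals its infimum $1-\delta^{-1}\varphi^*(\delta)$ with $\nabla F=0$ on $\co\{y-\omega^{-1}(\delta)N(y)\}$; the upper bound on $\|\nabla F\|$ along $\{F=1\}$ then comes from $\omega$-continuity measured from those minimizers, and the lower bound from the convexity slope estimate over a distance at most $\varphi^{-1}(1)+\omega^{-1}(\delta)$. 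Your substitute for this, the ``identity'' $M\varphi(\|z-y\|)=\langle N(y),y-z\rangle\le\|z-y\|$ at $z\in F^{-1}(1)$, is not valid: it would hold for the infimal expression $g$ at a point where $g=1$ and the infimum is (nearly) attained, but $F=\mathrm{conv}(g)$ and $F(z)=1$ only gives $g(z)\ge 1$; the ``base points'' entering the envelope formula are attached to auxiliary points $x_i$ with $\sum_i\lambda_i x_i=z$, which may be far from $z$, so no such identity confines $\|z-y\|$. Moreover, even granting a localization of $F^{-1}(1)$ near $C$, you never say how it produces the two-sided bound on $\|\nabla F\|$: for that one needs exactly the computation you omit, namely that $\inf_X F=1-\delta^{-1}\varphi^*(\delta)$ is attained at the points $y-\omega^{-1}(\delta)N(y)$ (hence $\nabla F=0$ there), plus the $\omega$-continuity and convexity estimates. (The route you propose can in fact be repaired without the paper's extra term, but by a different mechanism: superlinearity of $\varphi$ gives $g\ge 1+\dist(\cdot,C)-K$ for a constant $K=K(\delta,\omega)$, and since $\dist(\cdot,\overline{\co}\,C)$ is convex this survives the convex envelope, localizing $\{F\le 1\}$ near $\overline{\co}\,C$; none of this is in your sketch.)

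The bounded case is also mishandled. In an infinite-dimensional Hilbert space $\partial V$ is never compact, and you give no reason why $F^{-1}(1)$ should even be bounded when $C$ is (this again requires the localization above); continuity and pointwise positivity of $\|\nabla F\|$ on a non-compact bounded set do not yield a positive infimum. Finally, your coercive modification $F+\lambda\,\dist(\cdot,\overline{B}(0,R))^2$ need not lie in $C^{1,\omega}(X)$ for a general modulus: a globally Lipschitz gradient is not globally $\omega$-continuous when $\omega$ is sublinear at infinity (e.g.\ $\omega(t)=\sqrt{t}$), so the modified function does not fit Definition \ref{definition of C1omega convex body}, and you would in any case have to re-verify the two-sided gradient bound on the new level set. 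The paper avoids all of this because its added term $\delta^{-1}\varphi^*(\delta)\varphi(d_A)$ is itself convex and of class $C^{1,\omega}$ (via the inequality $\varphi(\|x+h\|)+\varphi(\|x-h\|)-2\varphi(\|x\|)\le\varphi(2\|h\|)$) and is coercive when $C$ is bounded, which is how boundedness of $V$ is obtained there.
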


\begin{theorem}\label{main theorem for C1alpha superreflexive}
Let $C$ be a subset of a superreflexive Banach space $X$ such that $X$ has an equivalent differentiable norm with modulus of smoothness of power type $1+\alpha$, where $\alpha\in (0,1]$. Let us denote by $X^{*}$ (resp. by $S^{*}$) the dual space of $X$, endowed with the dual norm $\|\cdot\|_{*}$ of $\|\cdot\|$ (resp. the dual sphere of $(X, \|\cdot\|)$). Let $D:C\to S^{*}$ be a mapping. Then the following statements are equivalent.
\begin{enumerate}
\item There exists a $C^{1, \alpha}$ convex body $V$ such that $C\subseteq \partial V$ the hyperplane $H_x:=\{y\in X : D(x)(y)=D(x)(x)\}$ is tangent to $\partial V$ at $x$ and $V \subseteq H_x^{-}:=\{y\in X : D(x)(y)\leq D(x)(x)\}$ for every $x\in C$.
\item There exists some $\delta>0$ such that
$$
D(y)(y-x) \geq \delta \|D(x)-D(y)\|_{*}^{1+\frac{1}{\alpha} } \quad \textrm{for all} \quad x, y\in C.
$$
\end{enumerate}
Moreover, if we further assume that $C$ is bounded, then $V$ can be taken to be bounded as well.
\end{theorem}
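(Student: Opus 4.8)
The plan is to follow the scheme of Theorems \ref{main theorem for C11 Hilbert} and \ref{main theorem for C1omega Hilbert}, resting on two ingredients. First, on \emph{any} Banach space a convex $F\in C^{1,\alpha}(X)$ satisfies the Bregman-type inequality $F(x)-F(y)-DF(y)(x-y)\ge c\,\|DF(x)-DF(y)\|_*^{1+1/\alpha}$ for all $x,y$, with $c>0$ depending only on $\alpha$ and $\lip_\alpha(DF)$; indeed, applying to $z\mapsto F(z)-DF(y)(z)$ (which is minimized at $y$) the elementary estimate that a convex $C^{1,\alpha}$ function exceeds its infimum by at least $c\,\|DF(\cdot)\|_*^{1+1/\alpha}$ — obtained by descending along an almost-steepest-descent direction and optimizing the step length — gives it at once. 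Second, the Whitney-type extension theorem for convex functions of class $C^{1,\alpha}$ on superreflexive spaces with modulus of smoothness of power type $1+\alpha$ (the analogue here of \cite[Theorem 4.11]{AzagraMudarraExplicitFormulas}), which is exactly where the hypothesis on $X$ is genuinely used: a $1$-jet $(f,G)$ on $C$ with $f(x)-f(y)-G(y)(x-y)\ge\delta'\|G(x)-G(y)\|_*^{1+1/\alpha}$ on $C$ extends to a convex $F\in C^{1,\alpha}(X)$ with $(F,DF)=(f,G)$ on $C$.

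For $(1)\implies(2)$: I would take $V$, $M_0>0$ and convex $F\in C^{1,\alpha}(X)$ as in Definition \ref{definition of C1omega convex body}, so $S:=\partial V=F^{-1}(1)$, $M_0^{-1}\le\|DF\|_*\le M_0$ on $S$, with $C\subseteq S$, $H_x$ tangent to $S$ at $x$ and $V\subseteq H_x^{-}$ for $x\in C$. For $x\in C$ both $DF(x)$ and $D(x)$ are outward supporting functionals of $V$ at the smooth point $x$, hence $DF(x)=\|DF(x)\|_*D(x)$. Applying the first ingredient at $x,y\in C\subseteq S$ (where $F\equiv1$) gives $DF(y)(y-x)\ge c\|DF(x)-DF(y)\|_*^{1+1/\alpha}$; dividing by $\|DF(y)\|_*\le M_0$ and using $\|DF(x)-DF(y)\|_*\ge\tfrac1{2M_0}\|D(x)-D(y)\|_*$ (reverse triangle inequality together with $\|DF\|_*\ge M_0^{-1}$ on $S$) yields $(2)$ with $\delta=c\,M_0^{-1}(2M_0)^{-(1+1/\alpha)}$.

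For $(2)\implies(1)$: condition $(2)$ is precisely the hypothesis of the extension theorem for the jet $(1,D)$ on $C$, producing a convex $F\in C^{1,\alpha}(X)$ with $F=1$ and $DF=D$ on $C$; set $V:=F^{-1}((-\infty,1])$. Fixing $x_0\in C$, $DF(x_0)=D(x_0)\ne0$ makes $F$ take values $<1$, so $\interior(V)\ne\emptyset$ and $\inf_XF<1$; hence $DF\ne0$ on $F^{-1}(1)$ (a zero there would be a global minimum of value $1>\inf_XF$), so by the implicit function theorem $S:=\partial V=F^{-1}(1)$ is a $C^1$ hypersurface, $V$ is proper with $\interior(V)=F^{-1}((-\infty,1))$, and for $x\in C$ the tangent hyperplane at $x$ is $x+\ker DF(x)=H_x$, while $F(y)\ge F(x)+DF(x)(y-x)=1+D(x)(y-x)$ forces $V\subseteq H_x^{-}$. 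If $C$ is bounded one concludes immediately by replacing $F$ with $F(\cdot)+(\max\{0,\|\cdot\|-R\})^{1+\alpha}$ where $C\subseteq B(0,R)$: this is a convex $C^{1,\alpha}(X)$ perturbation vanishing on $B(0,R)\supseteq C$, so it preserves the jet on $C$ but makes $F$ coercive, $V$ bounded and $S$ compact, after which $\|DF\|_*$ is bounded above and below on $S$ by continuity and non-vanishing, so $V$ meets Definition \ref{definition of C1omega convex body}.

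The remaining point, which I expect to be the one genuine difficulty, is that for \emph{unbounded} $C$ one must still check that this $V$ satisfies Definition \ref{definition of C1omega convex body}, i.e. that $\|DF\|_*$ does not degenerate to $0$ along the possibly unbounded level set $S=F^{-1}(1)$. Here I would exploit the explicit form of the extension: up to a convexification, $F$ is an infimal-convolution-type function $x\mapsto\inf_{z\in C}[\,1+D(z)(x-z)+\Gamma(\|x-z\|)\,]$, with $\Gamma$ built from the modulus $t^\alpha$ and the parameter of the extension theorem. At a contact point $z\in S$ with minimizer $z^*\in C$, the identity $F(z)=1$ and $\|D(z^*)\|_*=1$ force $\Gamma(\|z-z^*\|)\le\|z-z^*\|$, hence $\|z-z^*\|$ to lie in a fixed bounded interval, and the same identity then confines $\|DF(z)\|_*$ to a fixed compact subinterval $[c_1,c_2]\subset(0,\infty)$ depending only on $\alpha$ and that parameter (in the Hilbertian model computation one finds $\|DF(z)\|_*^2=1+\Gamma'(t)^2-2\Gamma'(t)\Gamma(t)/t\ge\alpha^2$, $t=\|z-z^*\|$); on the affine flat faces of $\partial V$ the derivative $DF$ is constant and a limit of such contact-point values, so $\|DF\|_*\in[c_1,c_2]$ on all of $S$. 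Once this is established $V$ is a $C^{1,\alpha}$ convex body, and if one prefers a witness with unit-norm derivative on $S$ one may re-extend the jet $(1,N_S)$, $N_S=DF/\|DF\|_*$, which by the rolling-body computation of Theorem \ref{characterizationC1omegaconvexbody} now satisfies the extension hypothesis on all of $S$. Everything else — the Bregman inequality, the rewriting of the jet condition, the identification $S=\partial V$, the tangency and position of $H_x$ — is routine once the extension theorem is available; the real work, as the authors flag in the introduction, is exactly this control of the gradient over an unbounded level set.
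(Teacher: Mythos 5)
Your proof of $(1)\implies(2)$ is essentially the paper's: the authors also invoke the Bregman-type inequality for convex $C^{1,\alpha}$ functions (quoting \cite[Proposition 5.3]{AzagraMudarraExplicitFormulas} rather than reproving it), identify $D(x)=DF(x)/\|DF(x)\|_*$ from the tangency/position hypothesis, and compare $\|D(x)-D(y)\|_*$ with $\|DF(x)-DF(y)\|_*$ using $\inf_{\partial V}\|DF\|_*>0$. Your $(2)\implies(1)$ also starts as in the paper, applying the superreflexive extension theorem \cite[Theorem 5.5]{AzagraMudarraExplicitFormulas} to the jet $(1,D)$. The divergence, and the gap, is exactly at the step you flag as ``the real work'': verifying that the resulting sublevel body satisfies Definition \ref{definition of C1omega convex body}, i.e.\ that $\|DF\|_*$ is bounded above \emph{and away from zero} on all of $F^{-1}(1)$.

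Concretely: (i) in the bounded case your argument rests on ``$S$ compact, hence $\|DF\|_*$ bounded below by continuity and non-vanishing''; but in an infinite-dimensional superreflexive space a bounded closed hypersurface is not compact, and $\|DF\|_*$ is not weakly continuous, so no positive lower bound follows -- the quantitative problem is the same as in the unbounded case. (ii) In the unbounded case, your contact-point analysis of the inf-convolution does not close: the infimum defining $g(x)=\inf_{y\in C}\{1+D(y)(x-y)+\tfrac{M}{1+\alpha}\|x-y\|^{1+\alpha}\}$ need not be attained (arbitrary $C$, $D(y)$ varying with $y$, no weak lower semicontinuity), so ``minimizers $z^*\in C$'' may not exist; the points of $F^{-1}(1)$ where $\conv(g)<g$ are dismissed with the claim that $DF$ is constant on affine flat faces and a limit of contact-point gradients, which is precisely the kind of Carath\'eodory-type structure of convex envelopes that is unavailable in infinite dimensions and is not proved; and the displayed formula for $\|DF(z)\|^2$ is an inner-product computation that does not transfer to a general norm of power type $1+\alpha$. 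The paper circumvents all of this by \emph{modifying} the extension rather than analyzing it: with $A:=\overline{\co}\bigl(C\cup\{y-M^{-1/\alpha}N(y):y\in C\}\bigr)$ it sets $F:=H+d_A^{1+\alpha}$, proves $d_A^{1+\alpha}\in C^{1,\alpha}(X)$ from the power-type inequality for the norm, and then, for any $x$ with $F(x)=1$, compares with a nearby point $z\in\co\{y-M^{-1/\alpha}N(y)\}$: such $z$ is a global minimizer of $H$ lying in $A$, so $F(z)=\inf H=1-\delta$ and $DF(z)=0$, while $F(x)=1$ forces $d_A(x)$ (hence $\|x-z\|$) to be bounded by a constant; the upper bound for $\|DF(x)\|_*$ then comes from $\alpha$-H\"older continuity of $DF$ and $DF(z)=0$, and the lower bound from convexity, $\|DF(x)\|_*\geq (F(x)-F(z))/\|x-z\|\geq\delta/\mathrm{const}$, exactly as in the proof of Theorem \ref{main theorem for C1omega Hilbert}; the same added term makes $F$ coercive when $C$ is bounded, which is how the paper gets a bounded $V$ (your perturbation $\max\{0,\|\cdot\|-R\}^{1+\alpha}=d_{B(0,R)}^{1+\alpha}$ is of the same type, but by itself it does not repair the missing lower bound). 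If you insist on the bare extension $H$, a correct substitute would be to show first that $H\geq 1+\Psi(d_{\co(C)})$ for a suitable nondecreasing convex $\Psi$, so that the level set stays within bounded distance of $\co(C)$ and hence of the minimizing set, and then run the same convexity comparison; as written, the proposal has a genuine gap at this point.
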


Finally, let us observe that the above theorems cannot be extended to Banach spaces which are not superreflexive.

\begin{remark}
{\em Assume that Theorem \ref{main theorem for C1omega Hilbert} is true for a Banach space $(X, \|\cdot\|)$. Pick a point $x_{0}\in X\setminus\{0\}$, and a linear form $\xi_0\in S^*$. Then condition $(2)$ of Theorem \ref{main theorem for C1omega Hilbert} is trivially satisfied for $C:=\{x_0\}$ and $D(x_0) :=\xi_0$. Therefore there exists a {\em bounded} $C^{1, \omega}$ convex body $W$ such that $W$ is of class $C^{1, \omega}$. Up to a translation we may assume that $0\in\interior{W}$. Hence the Minkowski functional of $W$, denoted by $\mu_W$ is subadditive, positively homogeneous, and satisfies $\mu_W(x)=0 \iff x=0$. Moreover, with the same proof as in $(1) \implies (5)$ of Theorem \ref{characterization of C11 convex bodies} we obtain that $\mu_W$ is of class $C^{1,\omega}$ on the superlevel sets $\lbrace x\in X \: : \: \mu_W(x) \geq \alpha \rbrace$ for every $\alpha>0.$ If $0<r\leq R$ are such that $B(0,r) \subset W \subset B(0,R),$ then $R^{-1} \|\cdot \| \leq \mu_W \leq r^{-1} \| \cdot \|$ on $X$ and hence the function
$$
\rho(x) :=\mu_W (x)+ \mu_W (-x)
$$
defines an equivalent norm in $X$ which is uniformly differentiable on its unit sphere, and this implies that $X$ is superreflexive; see \cite{DGZ} for instance.
}
\end{remark}

\section{Proofs of the main results}

In this section we will prove Theorems \ref{main theorem for C11 Hilbert}, \ref{main theorem for C1omega Hilbert} and \ref{main theorem for C1alpha superreflexive}. 

\subsection{Proof of Theorem \ref{main theorem for C11 Hilbert}}

If $V$ is a $C^{1,1}$ convex body whose outer unit normal is $L$-Lipschitz, we know from Theorem \ref{characterization of C11 convex bodies} (3) that the inequality of $(2)$ in Theorem \ref{main theorem for C11 Hilbert} is satisfied with $L=r^{-1},$ for every $x,y\in \partial V.$ 

\medskip

Conversely, let us assume that $(2)$ is satisfied for $C \subset X, \: N:C \to S_X$ and $r>0.$ For every $y\in C,$ we define $B_y:= B(y-rN(y),r).$ Let us define
$$
V:= \overline{\co}\left( \bigcup_{y\in C} B_y \right),
$$
that is the closed convex hull of the union of the balls $B_y$. Obviously, we have $C\subset V$. Let us first see that in fact $C\subset\partial V$. Suppose that $y\in C \cap \interior(V).$ Then $y$ can be written as $y= \sum_{i=1}^n\lambda_i w_i;$ where $w_i \in \interior(B_{y_i}), \: y_i\in C, \: \lambda_i \geq 0,$ for every $i=1,\ldots,n, \: \sum_{i=1}^n \lambda_i=1$ and $n\in \N.$ By the assumption we have
$$
\langle N(y), y-y_i \rangle \geq \tfrac{r}{2} \| N(y)-N(y_i)\|^2 ,  \quad i=1,\ldots,n.
$$
This is equivalent to
$$
\langle N(y), y-z_i \rangle \geq r, \quad \text{where} \quad z_i:=y_i-rN(y_i), \quad i=1,\ldots,n.
$$
We obtain that $\langle N(y), y-\sum_{i=1}^n \lambda_i z_i \rangle \geq r,$ where $\| y - \sum_{i=1}^n \lambda_i z_i \| \leq \sum_{i=1}^n \lambda_i \| w_i-z_i\|<r,$ a contradiction. Hence $C \subseteq \partial V.$ Now we claim the following.

\begin{claim}\label{claimballrollsfreelysuficiency}
{\em For every $x\in \partial V$ there exists $z_x\in V$ such that $B(z_x,r) \subset V$ and $x\in \partial B(z_x,r).$ }
\end{claim}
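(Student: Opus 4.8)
The plan is to prove Claim \ref{claimballrollsfreelysuficiency} by exhibiting the center $z_x$ directly and then verifying that the ball of radius $r$ around it sits inside $V$ while touching $\partial V$ at $x$. Fix $x\in\partial V$. Since $V$ is the closed convex hull of the balls $B_y$, there is a supporting hyperplane to $V$ at $x$; let $u\in S_X$ be the outward unit normal of one such supporting hyperplane, so that $\langle u, v-x\rangle\le 0$ for all $v\in V$, equivalently $\langle u, v\rangle\le \langle u,x\rangle$ for all $v\in V$. The natural candidate is $z_x:=x-ru$. The first thing I would check is that $B(z_x,r)\subseteq V$: this is where the hypothesis $(2)$ and the construction via balls $B_y$ come in.

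To show $B(z_x,r)\subseteq V$ it suffices, since $V$ is closed and convex, to show that every supporting hyperplane of $V$ has the ball on the correct side; but more concretely I would argue via the extreme data. Take any $y\in C$. Because $B_y\subseteq V$, the supporting functional gives $\langle u, w\rangle\le\langle u,x\rangle$ for all $w\in B_y=B(y-rN(y),r)$, i.e. $\langle u, y-rN(y)\rangle + r\le \langle u,x\rangle$, hence
\begin{equation}\label{supportineq}
\langle u, x-y\rangle \ge r - r\langle u, N(y)\rangle = r\big(1-\langle u,N(y)\rangle\big)\ge 0.
\end{equation}
I would like to promote this into the statement that $x$ is actually the point $y^{*}-rN(y^{*})$ moved appropriately — more precisely, I expect to show $u=N(x)$ when $x\in C$, and in general that $u$ is a genuine unit normal and $z_x=x-ru$ has $\dist(z_x, \partial V)=r$. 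The key computation: for $y\in C$,
\begin{equation}\label{distcomp}
\|z_x-(y-rN(y))\|^2=\|x-ru-y+rN(y)\|^2=\|x-y\|^2+2r\langle x-y, N(y)-u\rangle+r^2\|N(y)-u\|^2,
\end{equation}
and using $(2)$ together with \eqref{supportineq} one should be able to bound $\langle x-y, u\rangle$ from above in terms of $\|N(y)-u\|$ and thereby conclude $\|z_x-(y-rN(y))\|\ge r$... no, that is the wrong direction. Instead I would bound this quantity from below and combine with the triangle inequality applied to points of $B_y$ to see that $B(z_x,r)$ meets no $B_y$ improperly, then take convex hulls and closures; the cleanest route is to show $\langle u, w\rangle\le \langle u,x\rangle-0$ fails to be improved, i.e. that $\langle u,\cdot\rangle$ attains its max over $V$ only where forced, giving $B(z_x,r)\subseteq V$ because for every $v=\sum\lambda_i w_i$ with $w_i\in B_{y_i}$ we already have $\langle u,v\rangle\le\langle u,x\rangle$, and then $z_x+r u'=z_x+ru=x$ shows $x\in\partial B(z_x,r)$, while any point of $B(z_x,r)$ is within $r$ of $z_x$ and on the correct side of every supporting hyperplane through points of the $B_y$'s.

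The step I expect to be the main obstacle is precisely the inclusion $B(z_x,r)\subseteq V$: one cannot simply invoke convexity since $V$ is an infinite convex hull and $z_x$ need not lie on any single $B_y$, so I anticipate needing to run an argument analogous to the one just used to prove $C\subseteq\partial V$ — namely, assume for contradiction that some point $p\in B(z_x,r)$ lies outside $V$, separate $p$ from $V$ by a hyperplane with unit normal $w$, so $\langle w, p\rangle>\sup_{v\in V}\langle w,v\rangle\ge\sup_{y\in C}\langle w, y-rN(y)\rangle + r$ using $B_y\subseteq V$, and derive a contradiction with $\|p-z_x\|\le r$ by choosing (or approximating) a suitable $y\in C$ whose defining ball $B_y$ points in the direction $w$; the hypothesis $(2)$, rewritten as $\langle N(y), y - z_i\rangle \ge r$ for $z_i=y_i-rN(y_i)$, is exactly what lets these directions be controlled. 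Once $B(z_x,r)\subseteq V$ is established, $z_x\in V$ is automatic, and $x=z_x+rN(x)\in\partial B(z_x,r)$ follows because $x\in\partial V$ has distance exactly $r$ from $z_x$ (distance $\le r$ since $x\in\overline{B(z_x,r)}$, distance $\ge r$ since $x\notin\interior V\supseteq$ nothing helps directly, so instead: $x=z_x+ru$ with $u$ the supporting normal, hence $\|x-z_x\|=r$). This will complete the proof of the Claim.
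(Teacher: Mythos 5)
Your candidate $z_x=x-ru$ is in fact the correct (and only possible) center, but the proof has a genuine gap exactly where you flag it: the inclusion $B(z_x,r)\subseteq V$ is never established. The inequality $\langle u,x-y\rangle\ge r\bigl(1-\langle u,N(y)\rangle\bigr)$ and the observation that $\langle u,v\rangle\le\langle u,x\rangle$ for all $v\in V$ only say that $V$ and $B(z_x,r)$ lie in the same supporting half-space at $x$; that is the wrong direction and gives no containment of the ball in $V$. The contradiction sketch does not close either: after separating a putative $p\in B(z_x,r)\setminus V$ from $V$ by a functional $w$, you propose to ``choose (or approximate) a suitable $y\in C$ whose defining ball $B_y$ points in the direction $w$'', but no such $y$ need exist. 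If $C$ is finite, the data provide only finitely many directions $N(y)$, whereas supporting normals of $\partial V$ (for instance along the flat part of the boundary of the convex hull of two balls) fill a continuum; hypothesis $(2)$ constrains the jet only at points of $C$ and says nothing about directions $u$ or $w$ that are not of the form $N(y)$.

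What actually makes the claim work, and what the paper does, is the algebraic structure of the hull plus a weak-compactness limit, with no supporting hyperplane at $x$ and no use of $(2)$ at this stage. For $y=\sum_i\lambda_i w_i$ with $w_i\in B_{y_i}$, take $z:=\sum_i\lambda_i z_i$ with $z_i=y_i-rN(y_i)$; then every $p\in B(z,r)$ decomposes as $p=\sum_i\lambda_i(p-z+z_i)$ with $p-z+z_i\in B_{y_i}$, so $B(z,r)\subseteq\co\bigl(\bigcup_{y\in C}B_y\bigr)\subseteq V$ and $\|z-y\|\le r$. A general $x\in\partial V$ is only a \emph{limit} of such combinations $y_k$, so one takes the corresponding centers $z_k$ (bounded, since $\|z_k-y_k\|\le r$), extracts a weakly convergent subsequence $z_k\rightharpoonup z_x$, verifies $B(z_x,r)\subseteq V$ by separating a hypothetical outside point $w$ and testing against the translates $w+z_k-z_x\in B(z_k,r)\subseteq V$, and gets $\|x-z_x\|\le r$ from weak lower semicontinuity of the norm; equality then holds because $x\in\partial V$. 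This limiting step is unavoidable in infinite dimensions (norm convergence of the $z_k$ cannot be expected), and nothing in your sketch substitutes for it. Once the claim is proved this way, your identification $z_x=x-ru$ follows a posteriori: a ball $B(z_x,r)\subseteq V$ touching $\partial V$ at $x$ forces every supporting normal of $V$ at $x$ to equal $(x-z_x)/r$.
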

\begin{proof}
If $y\in \co \left( \bigcup_{x\in C} B_x \right)$, then $y$ can be written as $y= \sum_{i=1}^n \lambda_i w_i,$ where $\lambda_i \geq 0$ and $w_i \in B_{y_i}$ for every $i=1, \ldots,n,$ $\sum_{i=1}^n \lambda_i=1$ and $n\in \N.$ Set $z_i:=y_i-rN(y_i)$ (the center of $B_{y_i}$), for $i=1, \ldots,n$, and $z:= \sum_{i=1}^n \lambda_i z_i$. Given any point $p \in B(z,r),$ it is clear that $p= \sum_{i=1}^n \lambda_i p_i,$ where $p_i:= p-z+z_i$ and $\|p_i-z_i\| = \| p-z\|\leq r$, hence $p_i\in B_{y_i}$ for every $i=1, \ldots, n$. This shows that $p\in \co \left( \bigcup_{x\in C} B_x \right) \subset V$ and therefore $B(z,r) \subset V$ with $\| z-y\| \leq \sum_{i=1}^n \lambda_i \| z_i-w_i\| \leq r.$ 

Now, let $x\in \partial V$ and consider a sequence $(y_k)_k \subset \co \left( \bigcup_{y\in C} B_y \right)$ converging to $x.$ By the above argument we can find a sequence $(z_k)_k$ on $V$ such that $y_k \in B(z_k,r) \subset V$ for every $k.$ Up to passing to a subsequence, we may assume that $(z_k)_k$ weakly converges to some $z_x \in V.$ Let us see that $B(z_x,r) \subset V.$ Indeed, otherwise there exist $u \in X \setminus \lbrace 0 \rbrace, \: \alpha \in \R$ and $w \in B(z_x,r)$ such that $V \subset \lbrace \langle u, \cdot \rangle \leq \alpha \rbrace$ and $\langle u, w \rangle > \alpha.$ The point $w_k:=w+z_k-z_x$ belongs to $B(z_k,r) \subset V$ for every $k$ and $(w_k)_k$ weakly converges to $w.$ Thus we have $\alpha \geq \lim_k \langle u, w_k \rangle = \langle u, w \rangle > \alpha,$ a contradiction. Therefore $B(z_x,r) \subset V.$ Also, observe that $(z_k-y_k)_k$ weakly converges to $(z_x-x),$ where $\| z_k-y_k\| \leq r$ for every $k$, and because $B(0,r)$ is weakly closed, we have $\|z_x-x\|\leq r$, that is, $x\in B(z_x,r).$ In fact, $x\in \partial B(z_x,r)$ because $x\in \partial V$ and $B(z_x,r) \subset V.$
\end{proof}

Now, let us see that $\partial V$ is a $C^1$ manifold. We can assume without loss of generality that $0 \in \interior(V).$ For every $x\in \partial V,$ take $z_x$ as in Claim \ref{claimballrollsfreelysuficiency} and define $g(y):=r^{-1}\|y-z_x\|$ for every $y\in X.$ Observe that $\mu_V(y)=\mu_{V-z_x}(y-z_x)$ and $g(y)=\mu_{B(z_x,r)-z_x}(y-z_x)$ for every $y\in X.$ Then, because $B(z_x,r) \subset V$ and $x\in \partial V \cap \partial B(z_x,r),$ $\mu_V$ and $g$ are two continuous convex functions such that $\mu_V \leq g$ on $X$ and $\mu_V (x)=g(x)=1.$ Since $g$ is differentiable at $x$, we conclude that $\mu_V$ is differentiable at $x$ too with $\nabla \mu_V(x)=\nabla g(x)= r^{-1}(x-z_x)/\|x-z_x\|.$ We have shown that $\mu_V$ is differentiable on $\partial V,$ and by homogeneity, $\mu_V$ is differentiable on an open neighbourhood of $\partial V.$ In conclusion $V$ is a $C^1$ manifold.

To see that $\partial V$ is of class $C^{1,1}$ with $\lip(N_{\partial V}) \leq r^{-1},$ it is now enough to apply Theorem \ref{characterization of C11 convex bodies} $(2)$ in combination with Claim \ref{claimballrollsfreelysuficiency}.

Finally, if $y\in C,$ observe that, by definition of $V,$ the point $z_y:=y-rN(y)$ is such that Claim \ref{claimballrollsfreelysuficiency} is true for the ball $B(z_y,r).$ Using the above argument we obtain (assuming that $0\in \interior(V)$) that $\nabla \mu_V(y)=r^{-1}(y-z_y)/\|y-z_y\| = r^{-1}N(y).$ In consequence $N$ coincides with the outer unit normal $N_{\partial V}$ to $\partial V$ at points of $C.$ This completes the proof of Theorem \ref{main theorem for C11 Hilbert}.

\subsection{Proof of Theorem \ref{main theorem for C1omega Hilbert}} 

It is clear that $(1$) implies $(2)$ from the characterizations provided in Theorem \ref{characterizationC1omegaconvexbody}.

\medskip

Conversely, let us assume that $(2)$ is satisfied. Let us define $\varphi(t) := \int_0^t \omega(s) ds$ for every $t\geq 0.$ The Fenchel conjugate of $\varphi$ is defined by $$\varphi^*(t)=\int_0^t \omega^{-1}(s)ds $$ for every $t\geq 0,$ and it is clear that $\varphi^*(t)\leq t \omega^{-1}(t).$ By assumption we have
$$
\langle N(y),y-x \rangle \geq \| N(x)-N(y)\| \omega^{-1}\left( \delta \| N(x)-N(y)\| \right) \geq  \delta^{-1} \varphi^*\left( \delta \| N(x)-N(y)\| \right), \quad x,y\in C.
$$
Therefore, the jet $(f,G):=(1,N)$ satisfies the inequality
$$
f (x) \geq f(y) + \langle G (y), x-y \rangle + \delta^{-1} \varphi^*\left( \delta \| G(x)-G(y)\| \right) \quad \text{for every} \quad x,y\in C.
$$
According to \cite[Theorem 4.11]{AzagraMudarraExplicitFormulas}, the function
$$
H:=\textrm{conv}(g), \quad \text{where} \quad g(x)=\inf_{y\in C} \lbrace 1+ \langle N(y),x-y \rangle + \delta^{-1}\varphi(\|x-y\|) \rbrace, \quad x\in X,
$$
is convex and of class $C^{1,\omega}(X)$ with $H=1$ and $\nabla H=N$ on $C.$ Bearing in mind the identities $\varphi(\omega^{-1}(\delta)) + \varphi^*(\delta) = \delta \omega^{-1}(\delta)$ and $\varphi'=\omega,$ it is easy to see that, for every $y\in C,$ the function $z \mapsto 1+ \langle N(y),z-y \rangle + \delta^{-1}\varphi(\|z-y\|)$ attains its global minimum at $z_y= y-\omega^{-1}N(y)$ and this minimum value is $1-\delta^{-1} \varphi^*(\delta).$ This easily implies
\begin{equation}\label{estimationminimumfunction}
H\left( y- \omega^{-1}(\delta)N(y) \right) = \inf_X H = 1-\delta^{-1} \varphi^*(\delta) \quad \text{for every} \quad y \in C.
\end{equation}
We now define
\begin{equation}\label{definitionsetsmodulifunctions}
A:= \overline{\co}(C \cup \lbrace y-\omega^{-1}(\delta)N(y) \: : \: y\in C \rbrace), \quad  F(x):= H(x)+ \delta^{-1} \varphi^*(\delta) \varphi \left( d_A(x) \right) \quad x\in X,
\end{equation}
where $d_A$ stands for the distance function to $A.$ The function $F$ is convex because so are $H,$ $\varphi$ and $d_A,$ and $\varphi$ is increasing. In addition, we have that
\begin{equation}\label{inequalityHilbertnorm}
\varphi( \| x+h\|) + \varphi( \| x-h\|) - 2 \varphi( \| x\|) \leq \varphi( 2 \| h \| ) , \quad x,h \in X;
\end{equation}
see \cite[Lemma 4.6]{AzagraMudarraExplicitFormulas}. Thus if $x,h\in X$ and $y \in A$ is such that $d_A(x)=\| x-y\|,$ the inequality \eqref{inequalityHilbertnorm} for $x-y$ and $h$ gives
$$
\varphi( d_A(x+h)) + \varphi( d_A(x-h))-2\varphi( d_A(x)) \leq \varphi( \| x+h-y\|) + \varphi(\| x-h-y\|) - 2 \varphi( \| x-y\|) \leq \varphi( 2 \|h\|).
$$
Since $\varphi \circ d_A$ is continuous and convex, the above inequality shows that $\varphi \circ d_A$ is of class $C^{1,\omega}(X);$ see \cite[Proposition 4.5]{AzagraMudarraExplicitFormulas} for a proof of this fact. This shows that $F$ is a $C^{1,\omega}(X)$ convex function. Finally, let us check that $V= F^{-1} (-\infty,1]$ is the desired convex body. By \eqref{estimationminimumfunction}, $V$ is a non-degenerate sublevel set of a differentiable convex function, that is, $V$ is a convex body of class $C^1.$ It is obvious that $C \subseteq  F^{-1}(1) = \partial V$ and the outer unit normal $N_{\partial V}$ to $\partial V$ coincides with $\nabla F / \| \nabla F \| = N$ on $C.$ According to Definition \ref{definition of C1omega convex body} $V$ will be of class $C^{1,\omega}$ as soon as we find $M>0$ such that $M^{-1} \leq \| \nabla F(x) \| \leq M$ whenever $ F(x)=1.$ Given $x\in X$ with $F(x)=1$ and $\varepsilon>0,$ it is easy to see from \eqref{definitionsetsmodulifunctions} that we can find 
\begin{equation}\label{approximationinequality}
z \in \co \lbrace y-\omega^{-1}(\delta)N(y) \: : \: y\in C \rbrace \quad \text{such that} \quad \| x-z \| \leq  d_A(x) +  \omega^{-1}(\delta) + \varepsilon.
\end{equation} 
Since $z\in A,$ we have that $\varphi(d_A(z))=0$ and $\nabla (\varphi \circ d_A) (z)=0.$ Then \eqref{estimationminimumfunction} and the convexity of $H$ give $F(z)=1-\delta^{-1} \varphi^*(\delta)$ and $\nabla F(z)=0.$ Because $H$ is bounded below by $1-\delta^{-1} \varphi^*(\delta),$ it follows from \eqref{definitionsetsmodulifunctions} that $d_A(x) \leq \varphi^{-1}(1).$ Since $\nabla F$ is $\omega$-continuous on $X$, there exists some $L>0$ such that
$$
\| \nabla F(x) \| \leq  \| \nabla F(z) \| + L \omega( \| x-z\| )  = L \omega( \| x-z\| ),
$$
and \eqref{approximationinequality} together with the preceding remarks yield
$$ \| \nabla F(x) \| \leq L \omega \left(  d_A(x) +  \omega^{-1}(\delta) +\varepsilon \right) \leq  L \omega \left(  \varphi^{-1}(1) +  \omega^{-1}(\delta) +\varepsilon \right).
$$
By letting $\varepsilon \to 0^+$ we obtain $\| \nabla F(x) \| \leq L \omega \left(  \varphi^{-1}(1) +  \omega^{-1}(\delta) \right).$ On the other hand, the convexity of $F$ gives
$$
\| \nabla F(x) \| \geq \frac{F(x)-F(z)}{\|x-z\|} = \frac{\delta^{-1} \varphi^*(\delta)}{\|x-z\|} \geq \frac{\delta^{-1} \varphi^*(\delta)}{ \varphi^{-1}(1)+\omega^{-1}(\delta)+\varepsilon},
$$
Letting $\varepsilon \to 0^+,$ we conclude $\| \nabla F(x)\| \geq \delta^{-1} \varphi^*(\delta)\left( \varphi^{-1}(1)+\omega^{-1}(\delta) \right)^{-1}. $ 

In addition, let us see that if $C$ is bounded, the convex body $V$ is also bounded. Indeed, the set $A$ in \eqref{definitionsetsmodulifunctions} is bounded because so is $C$ and because $\|N\|=1$. Thus the function $\varphi \circ d_A$ is coercive, that is, $\lim_{\|x\| \to \infty} \varphi(d_A(x))= +\infty$ (observe that $\varphi$ is coercive because so is $\omega$ and we have the inequality $\varphi(t) \geq \tfrac{t}{2} \omega( \tfrac{t}{2} )$). Since $H$ is bounded below on $X,$ the function $F$ of \eqref{definitionsetsmodulifunctions} is coercive too and therefore $V=F^{-1}(-\infty,1]$ is a bounded subset.

\subsection{Proof of Theorem \ref{main theorem for C1alpha superreflexive}}
Let us first assume that $V$ is the $C^{1,\alpha}$ convex body of $(1)$ and consider a convex function $F \in C^{1,\alpha}(X)$ as in Definition \ref{definition of C1omega convex body}. We know from \cite[Proposition 5.3]{AzagraMudarraExplicitFormulas} that there exists $\delta>0$ such that
\begin{equation}\label{inequalitycw1alphanecessity}
F(x)-F(y)+ DF(y)(y-x) \geq \delta \| DF(x)-DF(y)\|_*^{1+\frac{1}{\alpha}} \quad \text{for every} \quad x,y\in X.
\end{equation}
Also, because $V=F^{-1}(-\infty,1],$ $\partial V =F^{-1}(1)$ and $F$ is $C^1,$ the hyperplane $\lbrace y\in X \: : \: DF(x)(y-x)=0 \rbrace$ is tangent to $\partial V$ at $x$ for every $x\in \partial V.$ By the assumption, $D$ must be a positive multiple of $D F$ and therefore $D(x) = \frac{DF(x)}{\| DF(x)\|_*}$ for every $x\in C.$ We can easily dedude that
 \begin{equation}\label{estimationnormaldifferentialvarphialpha}
 \| D(x)-D(y)\|_* \leq \frac{2 \| D F(x)-D F(y)\|_*}{\| D F(y)\|_*} \quad \text{for every} \quad x,y\in C.
 \end{equation}
By plugging \eqref{estimationnormaldifferentialvarphialpha} in \eqref{inequalitycw1alphanecessity} and bearing in mind that $\inf_{\partial V} \| DF\|_*$ is positive we conclude
$$
D(y)(y-x) \geq \tfrac{\delta}{2^{1+\frac{1}{\alpha}}} \Big ( \inf_{\partial V} \| D F\|_* \Big )^{\frac{1}{\alpha}}\| D(x)-D(y)\|_* ^{1+\frac{1}{\alpha}} \quad \text{for every} \quad x,y\in C.
$$

\medskip

Conversely, assume that $(2)$ is satisfied. Let $L \geq 2$ be a constant such that
\begin{equation}\label{inequalityrenorming}
\| x+h\|^{1+\alpha} + \| x-h\|^{1+\alpha} - 2\| x\|^{1+\alpha} \leq L \| h\|^{1+\alpha}, \quad x,h \in X.
\end{equation}
Since $X$ is reflexive and the norm $\| \cdot\|$ is strictly convex, for every $y\in C$ we can find a unique $N(y) \in S_X$ such that $D(y)(N(y))=1.$ By assumption, the jet $(f,G):=(1,D)$ defined on $C$ satisfies the inequality
$$
f (x) \geq f(y) + G (y)( x-y ) + \delta\| G (y)-G (x)\|_*^{1+\frac{1}{\alpha}}, \quad x,y\in C .
$$
Let $M>0$ be a constant such that $\delta = \tfrac{\alpha}{(1+\alpha) M^{1/\alpha}}.$ According to \cite[Theorem 5.5]{AzagraMudarraExplicitFormulas}, the function
$$
H:=\textrm{conv}(g), \quad \text{where} \quad g(x)=\inf_{y\in C} \lbrace 1+ D(y)(x-y ) + \tfrac{M}{1+\alpha} \|x-y\|^{1+\alpha} \rbrace, \quad x\in X,
$$
is convex and of class $C^{1,\alpha}(X)$ with $H=1$ and $D H=D$ on $C.$ It is easy to see that each function $z \mapsto 1+ D(y)(z-y ) + \tfrac{M}{1+\alpha} \|z-y\|^{1+\alpha}$ attains its global minimum at the point $z_y= y-M^{-1/\alpha} N(y)$ and this minimum value is $1-\tfrac{\alpha}{1+\alpha}M^{-1/\alpha} = 1-\delta.$ This shows that
$$
H\left( y-M^{-1/\alpha} N(y) \right) = \inf_X H = 1-\delta \quad \text{for every} \quad y \in C.
$$
Let us define
$$
A:= \overline{\co}(C \cup \lbrace y-M^{-1/\alpha} N(y) \: : \: y\in C \rbrace), \qquad F := H + d_A^{1+\alpha} \quad \text{on} \quad X,
$$
where $d_A$ stands for the distance function to $A.$ The function $d_A^{1+\alpha}$ is convex because $A$ is a convex subset. Given $x,h\in X$ we can find $y \in A$ is such that $d(x,A)=\| x-y\|$ because $X$ is reflexive. Then the inequality \eqref{inequalityrenorming} for $x-y$ and $h$ gives
$$
d_A(x+h)^{1+\alpha} + d_A(x-h)^{1+\alpha}-2d_A(x)^{1+\alpha} \leq \| x+h-y\|^{1+\alpha} + \| x-h-y\|^{1+\alpha} - 2  \| x-y\|^{1+\alpha} \leq L \|h\|^{1+\alpha}.
$$
Therefore, since $d_A^{1+\alpha}$ is continuous and convex, $d_A^{1+\alpha}$ is of class $C^{1,\alpha}(X);$ see \cite[Proposition 5.4]{AzagraMudarraExplicitFormulas}. Now it is enough to define $V=F^{-1} (-\infty,1] $ and imitate the proof of Theorem \ref{main theorem for C1omega Hilbert}.

\section*{Acknowledgements}
D. Azagra and C. Mudarra were partially supported by Grant MTM2015-65825-P and by the Severo Ochoa Program for Centres of Excellence in R\&D (Grant SEV-2015-0554).


\end{document}